\newtheorem{theorem}{Theorem}[section]
\newtheorem{lemma}[theorem]{Lemma}
\theoremstyle{definition}
\newtheorem{definition}[theorem]{Definition}
\newtheorem{example}[theorem]{Example}
\newtheorem{proposition}[theorem]{Proposition}
\newtheorem{corollary}[theorem]{Corollary}
\theoremstyle{remark}
\newtheorem{remark}[theorem]{Remark}
\DeclareMathOperator{\Fix}{Fix}
\begin{document}

\begin{center}
\LARGE
\textbf{Fixed point results for  single and multi-valued  three-points contractions}\footnote{The third author is supported by Researchers Supporting Project number (RSP2024R4), King Saud University, Riyadh, Saudi Arabia. This work was partially supported by a grant from the Simons Foundation (PD-Ukraine-00010584, E. Petrov)}\\[6pt]
\small
\textbf {Mohamed Jleli\textsuperscript{a}, Evgeniy Petrov\textsuperscript{b,}\footnote{Corresponding author.}, Bessem Samet\textsuperscript{a}}\\[6pt]
\textsuperscript{a}Department of Mathematics, College of Science, King Saud University, Riyadh 11451, Saudi Arabia \\ jleli@ksu.edu.sa, bsamet@ksu.edu.sa\\[6pt]
\textsuperscript{b}Function Theory Department, Institute of Applied Mathematics and Mechanics of the NAS of Ukraine, Batiuka str. 19, Slovyansk, 84116, Ukraine \\ eugeniy.petrov@gmail.com\\[6pt]
Received: date\quad/\quad
Revised: date\quad/\quad
Publised online: data
\end{center}

\begin{abstract}
In this paper, we are concerned with the study of the existence of fixed points for  single and multi-valued  three-points contractions. Namely, we first introduce a new class of single-valued mappings defined on a metric space equipped with three metrics.  A fixed point theorem is established for such mappings. The obtained result recovers that established  recently by the second author [J. Fixed Point Theory Appl. 25 (2023) 74] for the class of single-valued mappings  contracting perimeters of triangles. We next extend our study by introducing the class of multivalued three points contractions.  A fixed point theorem, which is a multi-valued version of that obtained in the above reference, is established. Some examples showing the validity of our obtained results are provided. \vskip 2mm

\textbf{Keywords:} fixed points, single-valued three-points contractions, multi-valued three-points contractions, mappings  contracting perimeters of triangles.

\end{abstract}

\section{Introduction}\label{sec1}

Banach's contraction principle \cite{Banach} is one of the most celebrated fixed point theorems. This theorem  states that, if $F$ is a self-mapping  defined on a complete metric space $(M,d)$ and satisfies

\begin{equation}\label{e0}
d(Fu,Fv)\leq \lambda d(u,v),\quad (u,v)\in M\times M,	
\end{equation}
where $\lambda\in [0,1)$ is a constant, then $F$ possesses one and only one  fixed point. Moreover, for any $u_0\in M$, the Picard sequence $\{u_n\}\subset X$
defined by $u_{n+1}=Fu_n$, $n\geq 0$, converges to the unique fixed point of $F$.
The mapping $F$ satisfying inequality~(\ref{e0}) with $\lambda\in [0,1)$ is called a contraction. The literature includes  several interesting generalizations and extensions of the above result. We may distinguish at least two categories of generalizations and extensions. In the first one, the contractive nature of the mapping is weakened, see e.g. the series of papers \cite{BO,Ciric,KH,Kirk,PO,PRO,RA,Rus2} and the monograph \cite{AG}. In the second category, the topology of the underlying space is weakened, see e.g. \cite{BR,CZ,JS,Mustafa,OL}.

The study of fixed points for multi-valued mappings was first considered in the paper \cite{Nadler} by Nadler, where he proved the following interesting result.
\begin{theorem}[Nadler (1969)] \label{T1.1}
Let $(M,d)$ be a complete metric space and $F: M\to CB(M)$ be a given multi-valued mapping, where $CB(M)$ denotes the family of all nonempty bounded and closed subsets of $M$. Assume that
$$
H(Fu,Fv)\leq \lambda d(u,v),\quad (u,v)\in M\times M,	
$$	
where $\lambda\in (0,1)$ is a constant and $H$ is the Hausdorff-Pompeiu metric on $CB(M)$. Then  $F$ possesses at least one fixed point.
\end{theorem}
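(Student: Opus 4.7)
The plan is to construct a Picard-like sequence $\{u_n\}$ with $u_{n+1}\in Fu_n$ that is Cauchy, invoke completeness of $(M,d)$ to obtain a limit $u^\ast$, and then verify $u^\ast\in Fu^\ast$ using the closedness of $Fu^\ast$. The key tool I will need is a standard selection property of the Hausdorff--Pompeiu metric: for any $A,B\in CB(M)$ with $H(A,B)>0$, any $a\in A$, and any $q>1$, there exists $b\in B$ satisfying $d(a,b)\leq qH(A,B)$. (The degenerate case $H(A,B)=0$ forces $A=B$, and any $a\in A$ lies in $B$ as well.)

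First I would fix a parameter $q>1$ with $\alpha:=q\lambda<1$. Starting from an arbitrary $u_0\in M$, pick $u_1\in Fu_0$. Proceeding inductively, assume $u_n\in Fu_{n-1}$ has been constructed. If $u_n\in Fu_n$ we are already done; otherwise, applying the selection property to $A=Fu_{n-1}$, $B=Fu_n$, $a=u_n$, I obtain $u_{n+1}\in Fu_n$ with
\[
d(u_n,u_{n+1})\leq q\,H(Fu_{n-1},Fu_n)\leq q\lambda\,d(u_{n-1},u_n)=\alpha\,d(u_{n-1},u_n).
\]
A routine induction then gives $d(u_n,u_{n+1})\leq \alpha^n d(u_0,u_1)$, and summing a geometric series shows that $\{u_n\}$ is Cauchy.

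By completeness, there exists $u^\ast\in M$ with $u_n\to u^\ast$. To conclude, I would bound the distance from $u^\ast$ to the closed set $Fu^\ast$ by
\[
d(u^\ast,Fu^\ast)\leq d(u^\ast,u_{n+1})+d(u_{n+1},Fu^\ast)\leq d(u^\ast,u_{n+1})+H(Fu_n,Fu^\ast)\leq d(u^\ast,u_{n+1})+\lambda\,d(u_n,u^\ast),
\]
where I used that $u_{n+1}\in Fu_n$ implies $d(u_{n+1},Fu^\ast)\leq H(Fu_n,Fu^\ast)$. Letting $n\to\infty$ gives $d(u^\ast,Fu^\ast)=0$, and since $Fu^\ast\in CB(M)$ is closed, this forces $u^\ast\in Fu^\ast$.

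The main subtlety is the Hausdorff selection step: the multiplicative inequality $d(a,b)\leq qH(A,B)$ is only available for $q>1$, which is precisely why the contraction constant $\lambda$ must be strictly less than $1$ (so that $q$ can be chosen with $q\lambda<1$). The trivial special cases (when $H(Fu_{n-1},Fu_n)=0$, or when the iteration already produces a fixed point) are disposed of immediately, and uniqueness is not asserted because it genuinely can fail for multi-valued contractions.
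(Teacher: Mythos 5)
Your proof is correct. Note that the paper does not actually prove Theorem \ref{T1.1} itself (it is quoted from Nadler's paper), but the same machinery reappears in the paper's proof of its multi-valued result, Theorem \ref{T3.4}, and there the route is different from yours: the paper uses the additive selection property of Lemma \ref{L3.2} (for every $a\in A$ and $\varepsilon>0$ there is $b\in B$ with $d(a,b)\leq H(A,B)+\varepsilon$), choosing $\varepsilon=\lambda^n$ at the $n$-th step, which is also how Nadler's original argument runs. That choice yields bounds of the form $d(u_n,u_{n+1})\leq \lambda^n d(u_0,u_1)+n\lambda^n$ (in the paper, $\lambda^n p_0+n(\lambda^n+\lambda^{n+1})$), so Cauchyness rests on the convergence of both $\sum\lambda^m$ and $\sum m\lambda^m$. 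You instead use the multiplicative selection: for $q>1$ pick $u_{n+1}\in Fu_n$ with $d(u_n,u_{n+1})\leq qH(Fu_{n-1},Fu_n)$, choosing $q$ so that $\alpha=q\lambda<1$; this gives the cleaner purely geometric estimate $d(u_n,u_{n+1})\leq \alpha^n d(u_0,u_1)$ and avoids the $\sum m\lambda^m$ series, at the cost of consuming part of the gap between $\lambda$ and $1$ to absorb $q$ (your justification of the selection step, including the degenerate case $H=0$, is sound, since $H(Fu_{n-1},Fu_n)=0$ together with $u_n\in Fu_{n-1}$ already yields a fixed point). The closing step, estimating $D(u^*,Fu^*)\leq d(u^*,u_{n+1})+H(Fu_n,Fu^*)$ and using closedness of $Fu^*$, is essentially identical to the paper's (with the paper writing $D$ for the point-to-set distance where you write $d$). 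Both arguments are complete; yours is slightly more economical, while the additive $\varepsilon=\lambda^n$ device is the one that transfers directly to the three-point contractions of Section \ref{sec3}.
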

Theorem \ref{T1.1} was generalized and extended in various directions, see e.g. \cite{AS,Feng,IT,KA,MIZ,PE,Rus0}. More recent references can be found in \cite{PE2,TA}.

Recently, the second author \cite{Petrov} obtained a generalization of Banach's fixed point theorem by introducing the class of single-valued mappings  contracting perimeters of triangles (three-points contractions). Namely, he studied the existence of fixed points for the following class of mappings.

\begin{definition}\label{def1.2}
Let $(M,d)$ be a metric space with $|M|\geq 3$. A single-valued mapping $F: M\to M$ is said to be a mapping contracting perimeters of triangles on $M$, if there exists $\lambda \in [0,1)$ such that the inequality
$$
d(Fx,Fy)+d(Fy,Fz)+ d(Fz,Fx)\leq \lambda [d(x,y)+d(y,z)+d(z,x)],
$$
holds for all three pairwise distinct points $x,y,z \in M$.
\end{definition}

The following fixed point result was established in \cite{Petrov}.

\begin{theorem}\label{T1.3}
Let $(M,d)$ be a complete metric space with  $|M|\geq 3$. Let the
mapping $F: M \to M$  satisfies the following two conditions:
\begin{itemize}
\item[{\rm{(i)}}] For all $u\in M$, $F(Fu)\neq u$, provided $Fu\neq u$;
\item[{\rm{(ii)}}]	$F$ is a mapping contracting perimeters of triangles on $M$.
\end{itemize}
Then $\Fix(F)\neq\emptyset$ and $|\Fix(F)|\leq 2$, where  $\Fix(F)$ denotes the set of fixed points of $F$.
\end{theorem}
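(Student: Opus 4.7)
The plan is to run the Picard iteration as in the proof of Banach's theorem, but to control the dynamics through triples of consecutive iterates rather than pairs. Fix an arbitrary $u_0 \in M$ and set $u_{n+1} = F u_n$. If $F u_n = u_n$ for some $n$ we are done, so assume $u_n \neq u_{n+1}$ for every $n$. Hypothesis (i) then gives $u_{n+2} = F(F u_n) \neq u_n$, so the triple $u_n, u_{n+1}, u_{n+2}$ is pairwise distinct, and hypothesis (ii) is applicable to it.

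Set $P_n = d(u_n, u_{n+1}) + d(u_{n+1}, u_{n+2}) + d(u_n, u_{n+2})$. Applying (ii) to $(u_n, u_{n+1}, u_{n+2})$ yields $P_{n+1} \leq \lambda P_n$, hence $P_n \leq \lambda^n P_0 \to 0$. In particular $d(u_n, u_{n+1}) \leq \lambda^n P_0$, so the standard geometric-series estimate shows that $\{u_n\}$ is Cauchy, and by completeness of $M$ it converges to some $u^* \in M$.

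The step I expect to be the main obstacle is proving that $u^*$ is a fixed point, because a three-points contraction does not a priori imply continuity of $F$. I would split into two cases. If $u^* = u_n$ for infinitely many $n$, then along that subsequence $u_{n+1} = F u^*$ is constant; since $u_{n+1} \to u^*$, this constant must be $u^*$. Otherwise $u_n \neq u^*$ for all $n$ large enough, so $u_n, u_{n+1}, u^*$ are pairwise distinct and feeding this triple into (ii) gives
\[
d(u_{n+1}, u_{n+2}) + d(u_{n+2}, F u^*) + d(F u^*, u_{n+1}) \leq \lambda\bigl[d(u_n, u_{n+1}) + d(u_{n+1}, u^*) + d(u^*, u_n)\bigr].
\]
The right-hand side tends to $0$, so passing to the limit forces $d(u^*, F u^*) = 0$.

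For the bound $|\Fix(F)| \leq 2$, I would argue by contradiction: if $x, y, z$ were three pairwise distinct fixed points, applying (ii) to them would yield $d(x,y) + d(y,z) + d(z,x) \leq \lambda\bigl[d(x,y) + d(y,z) + d(z,x)\bigr]$, which is impossible since $\lambda < 1$ and the common value is strictly positive.
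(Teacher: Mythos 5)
Your proof is correct, but it reaches the fixed point by a different route than the paper. Here the theorem is not proved directly: it is obtained as the special case $d_1=d_2=d_3=d$, $\varphi(t)=\lambda t$ of Corollary \ref{CR2.9}, which in turn combines Theorem \ref{T2.7} (the same Picard iteration and the same perimeter estimate, but with the \emph{additional hypothesis} that $F$ is continuous on $(M,d_1)$, so that $u^*=Fu^*$ follows simply by letting $n\to\infty$ in $u_{n+1}=Fu_n$) with Proposition \ref{PR2.8}, which shows by an accumulation-point argument that any three-points contraction with $\max\{d_2,d_3\}\le \kappa d_1$ is automatically $d_1$-continuous. You avoid the continuity lemma altogether: after the case split disposing of the possibility that $u^*$ equals $u_n$ infinitely often, you feed the pairwise distinct triple $(u_n,u_{n+1},u^*)$ into the contraction inequality and let $n\to\infty$, which forces $d(u^*,Fu^*)=0$ directly. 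Both arguments are sound; yours is more elementary and self-contained for this single-metric statement, whereas the paper's detour through continuity is what makes its argument reusable in the more general setting of Theorem \ref{T2.7}, where convergence of $\{u_n\}$ is only known in $d_1$ and $d_2,d_3$ may even be semimetrics, so your substitution trick would not transfer as stated. Your Cauchy estimate ($P_{n+1}\le\lambda P_n$, hence $d(u_n,u_{n+1})\le\lambda^n P_0$) and your proof that three pairwise distinct fixed points are impossible coincide with the corresponding steps in the paper.
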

Other fixed point results related to mappings contracting perimeters of triangles, can be found in \cite{Bey,BI,PO2}. {Three-point analogs of the well-known Kannan and Chatterjea fixed-point theorems were considered in~\cite{PB24} and~\cite{PP24}, respectively.}

We fix below some notations and recall some basic definitions.
\\
Throughout this paper, by $\mathbb{R}^+$, we mean the interval $[0,\infty)$. We denote by $M$  and arbitrary nonempty set. By $|M|$, we mean the cardinal of $M$.   For a single-valued mapping $F: M\to M$, we denote by $\Fix(F)$ the set of its fixed points, that is,
$$
\Fix(F)=\{u\in M: Fu=u\}.
$$
We define the sequence of mappings $(F^n)$, where $F^n: M\to M$,  by
$$
F^0=I_M\,\, (\mbox{i.e. }F^0x=x\mbox{ for all } x\in M)
$$
and
$$
F^{n+1}=F\circ F^n,\quad n\geq 0.
$$
Similarly, for a given function $\varphi: \mathbb{R}^+\to \mathbb{R}^+$, we define  the sequence of functions $\{\varphi^n\}$, where $\varphi^n: \mathbb{R}^+\to \mathbb{R}^+$,  by
$$
\varphi^0=I_{\mathbb{R}^+}, \quad  \varphi^{n+1}=\varphi\circ \varphi^n,\quad n\geq 0.
$$

Let $(M,d)$ be a metric space. By $CB(M)$, we denote the  family of all nonempty bounded and closed subsets of $M$. The distance between  two subsets $A$ and $B$ of $M$ is denoted by $D(A,B)$, that is,
$$
D(A,B)=\inf\{d(a,b): a\in A, b\in B\}.
$$
The diameter of $A,B\in CB(M)$ is denoted by $\mathcal{D}(A,B)$, that is,
$$
\mathcal{D}(A,B)=\sup_{(a,b)\in A\times B}d(a,b).
$$
We denote by $H$ the Hausdorff-Pompeiu metric on  $CB(M)$ induced by $d$, that is,
$$
H(A,B)=\max\left\{\sup_{a\in A} D(a,B), \sup_{b\in B}D(b,A)\right\},\quad A,B\in CB(M).
$$
Recall that $(CB(M),H)$ is a metric space.  Moreover, if $(M,d)$ is complete, then $(CB(M),H)$ is also complete (see e.g. \cite{Rus2}).

Let $\mathcal{P}(M)$ be the family of all nonempty subsets of $M$. We say that $u\in M$ is a fixed point of a multi-valued mapping $F: M\to \mathcal{P}(M)$, if $u\in Fu$. We also denote by $\Fix(F)$ the set of fixed points of $F$, that is,
$$
\Fix(F)=\left\{u\in M: u\in Fu\right\}.
$$

The rest of the paper is organized as follows. In Section \ref{sec2}, we introduce a certain class  of single-valued three-points contractions on $M$, where $M$ is equipped with three metrics $d_i$, $i=1,2,3$. A fixed point theorem is established for this class of mappings. In particular, we recover Theorem \ref{T1.3}. In Section \ref{sec3}, we extend Theorem \ref{T1.3} from the single-valued case to the multi-valued case.

\section{Three-points single-valued contractions} \label{sec2}

Let us  denote by $\Phi$ the set of functions $\varphi: \mathbb{R}^+\to \mathbb{R}^+$ satisfying the following conditions:
\begin{itemize}
\item[(C$_1$)] 	$\varphi$ is nondecreasing;
\item[(C$_2$)] $\displaystyle \sum_{n\geq 1} \varphi^n(s)<\infty$ for every $s>0$.
\end{itemize}

\begin{remark}
(i) From (C$_2$), we deduce that, if  $\varphi\in \Phi$, then
\begin{equation}\label{pp1}
\lim_{n\to \infty}\varphi^n(s)=0,\quad s>0.
\end{equation}	\vspace{-0.5cm}
(ii)  If $\varphi\in \Phi$, then
\begin{equation}\label{pp2}
\varphi(s)<s,\quad s>0.
\end{equation}
Indeed, if there exists $s>0$ such that $s\leq \varphi(s)$, then by (C$_1$), we get
$$
s\leq \varphi^n(s),\quad n\geq 0.
$$
Using \eqref{pp1} and passing to the limit as $n\to \infty$ in the above inequality, we reach a contradiction with $s>0$, which proves \eqref{pp2}. \\
(iii) From \eqref{pp2}, we deduce immediately that
$$
\lim_{s\to 0^+} \varphi(s)=0.
$$
\end{remark}

Some examples of functions $\varphi\in \Phi$ are provided below.

\begin{example}\label{ex2.2}
A basic example of a function $\varphi\in \Phi$ is the function
$$
\varphi(t)=\lambda t,\quad t\in \mathbb{R}^+,
$$	
where $\lambda\in (0,1)$ is a constant.
\end{example}

\begin{example}\label{ex2.3}
Let $\varphi: \mathbb{R}^+\to \mathbb{R}^+$ be the function defined by
$$
\varphi(t)=\frac{1}{2} \ln(t+1),\quad t\in \mathbb{R}^+.
$$	
Clearly, $\varphi$ is nondecreasing.   Moreover, for all $t>0$, we have
$$
\varphi(t)\leq \frac{1}{2}t.
$$
Setting above $\varphi(t)$ instead of $t$ we get
$$
\varphi^2(t)\leq \frac{1}{2}\varphi(t)\leq \left(\frac{1}{2}\right)^2 t.
$$
Then, by induction, we get
$$
\varphi^n(t)\leq \left(\frac{1}{2}\right)^n t,\quad n\geq 0,\, t>0,
$$
which implies that $\displaystyle \sum_{n\geq 1} \varphi^n(t)<\infty$ for all $t>0$. Consequently, $\varphi\in \Phi$.
\end{example}

\begin{example}\label{ex2.4}
Let $0<\lambda_1<\lambda_2<1$. Consider the function
$$
\varphi(t)=\left\{\begin{array}{llll}
\arctan(\lambda_1t) &\mbox{if}& 0\leq t\leq \frac{1}{\lambda_1},\\[4pt]
\arctan(\lambda_2t) &\mbox{if}& 	t>\frac{1}{\lambda_1}.
\end{array}
\right.
$$
Clearly, $\varphi$ is nondecreasing. On the other hand, for all $t>0$, we have
$$
\varphi(t)\leq \max\{\lambda_1,\lambda_2\} t =\lambda_2 t,
$$
which yields
$$
\varphi^n(t)\leq (\lambda_2)^n t, \quad n\geq 0,\, t>0.
$$
Then  $\displaystyle \sum_{n\geq 1} \varphi^n(t)<\infty$ for all $t>0$, and $\varphi\in \Phi$.
Remark that in this example, the function $\varphi$ is not continuous.
\end{example}

In this section, we are concerned with the study of fixed points for the following class of single-valued mappings.

\begin{definition}\label{def2.5}
Let $d_i$, $i=1,2,3$,  be three metrics on $M$ with $|M|\geq 3$ and let $\varphi\in \Phi$. We denote by $\mathcal{F}(M,d_1,d_2,d_3,\varphi)$ the class of mappings $F: M\to M$ 	satisfying the three-points contraction
\begin{equation}\label{Contraction-Single}
d_1(Fx,Fy)+d_2(Fy,Fz)+ d_3(Fz,Fx)\leq \varphi\left(d_1(x,y)+d_2(y,z)+d_3(z,x)\right)	\end{equation}
for every three pairwise distinct points $x,y,z \in M$.
\end{definition}

\begin{remark}\label{RK}
Remark that, if $F: (M,d)\to (M,d)$ is a mapping contracting perimeters of triangles on $M$, in the sense of Definition \ref{def1.2}, then $F$ satisfies \eqref{Contraction-Single} with $d_i=d$, $i=1,2,3$, and $\varphi(s)=\lambda s$, $\lambda\in [0,1)$, that is, $F\in \mathcal{F}(M,d,d,d,\varphi)$.
\end{remark}

{The following example shows that $F\in \mathcal F (M, d_1, d_2, d_3, \varphi)$ can be discontinuous on $M$ with respect to one of the metrics $d_i$, $i\in \{1,2,3\}$.}

\begin{example}{
Let $M=[0,1]$, $d_1$ be the Euclidean distance on $M$,  and
$$
d_2(x,y)=d_3(x,y)=
\begin{cases}
    d_1(x,y), & x,y\in [0,\frac{1}{2}] \text { or } x,y\in (\frac{1}{2},1]; \\
    1, & \text{otherwise.}
\end{cases}
$$
The reader can easily verify that $d_1$ and $d_2$ are metrics on $M$. Let $F : M\to M$ be the mapping defined by
$$
Fx=
\begin{cases}
    \frac{1}{3}x, & x\in [0,\frac{1}{2}]; \\
    \frac{1}{2}x, &  x\in (\frac{1}{2},1].
\end{cases}
$$
It is clear that $F$ is discontinuous at $x=\frac{1}{2}$ as a mapping from  $(M,d_1)$ to $(M,d_1)$. For every three pairwise distinct points $x,y,z \in M$, let
$$
R(x,y,z)=\frac{d_1(Fx,Fy)+d_2(Fy,Fz)+ d_3(Fz,Fx)}{d_1(x,y)+d_2(y,z)+d_3(z,x)}.
$$
We claim that
\begin{equation}\label{eq-cl-phi}
R(x,y,z)\leq \frac{1}{2}	
\end{equation}
for every three pairwise distinct points $x,y,z \in M$. In order to show \eqref{eq-cl-phi}, it is sufficient to check the four cases:
\begin{itemize}
\item  $x,y,z\in [0,\frac{1}{2}]$,
\item  $x,y,z\in (\frac{1}{2},1]$,
\item  $x\in [0,\frac{1}{2}]$ and $y,z\in (\frac{1}{2},1]$,
\item $x,y\in [0,\frac{1}{2}]$ and $z\in (\frac{1}{2},1]$.
\end{itemize}
Case 1:  $x,y,z\in [0,\frac{1}{2}]$.  In this case, we have
$$
\begin{aligned}
R(x,y,z)&=\frac{d_1(\frac{x}{3},\frac{y}{3})+d_1(\frac{y}{3},\frac{z}{3})+ d_1(\frac{z}{3},\frac{x}{3})}{d_1(x,y)+d_1(y,z)+d_1(z,x)}\\
&=\frac{1}{3}.
\end{aligned}
$$
Case 2:  $x,y,z\in (\frac{1}{2},1]$. In this case, we have
$$
\begin{aligned}
R(x,y,z)&=\frac{d_1(\frac{x}{2},\frac{y}{2})+d_1(\frac{y}{2},\frac{z}{2})+ d_1(\frac{z}{2},\frac{x}{2})}{d_1(x,y)+d_1(y,z)+d_1(z,x)}\\
&=\frac{1}{2}.
\end{aligned}
$$
Case 3:  $x\in [0,\frac{1}{2}]$ and $y,z\in (\frac{1}{2},1]$. In this case, we have
\begin{equation}\label{ex-Petrov}
\begin{aligned}
R(x,y,z)&=\frac{d_1(\frac{x}{3},\frac{y}{2})+d_1(\frac{y}{2},\frac{z}{2})+ d_1(\frac{z}{2},\frac{x}{3})}{d_1(x,y)+d_2(y,z)+d_3(z,x)}\\
&= \frac{d_1(\frac{x}{3},\frac{y}{2})+d_1(\frac{y}{2},\frac{z}{2})+ d_1(\frac{z}{2},\frac{x}{3})}{d_1(x,y)+d_1(y,z)+1}\\
&=\frac{1}{2} \frac{d_1(\frac{2x}{3},y)+d_1(y,z)+ d_1(z,\frac{2x}{3})}{d_1(x,y)+d_1(y,z)+1}.
\end{aligned}
\end{equation}
On the other hand, we have
\begin{equation}\label{ex-Petrov-2}
\frac{d_1(\frac{2x}{3},y)+d_1(y,z)+ d_1(z,\frac{2x}{3})}{d_1(x,y)+d_1(y,z)+1}\leq 1.	
\end{equation}
Indeed, since
$$
x< y,\quad \frac{2x}{3}< y,\quad x < z,\quad \frac{2x}{3}< z,
$$
then \eqref{ex-Petrov-2} is equivalent to
$$
 \frac{x}{3}+(1-z)\geq 0.
$$
This inequality evidently holds since $x\geq 0$ and $z\leq 1$. Then, from \eqref{ex-Petrov} and \eqref{ex-Petrov-2}, we obtain
$$
R(x,y,z)\leq \frac{1}{2}.
$$
Case 4: $x,y\in [0,\frac{1}{2}]$ and $z\in (\frac{1}{2},1]$. In this case, we have
$$
\begin{aligned}
R(x,y,z)&=\frac{d_1(\frac{x}{3},\frac{y}{3})+d_2(\frac{y}{3},\frac{z}{2})+ d_3(\frac{z}{2},\frac{x}{3})}{d_1(x,y)+1+1}\\
&\leq \frac{\frac{1}{2}d_1(x,y)+\frac{1}{2}+\frac{1}{2}}{d_1(x,y)+1+1}\\
&=\frac{1}{2}\frac{d_1(x,y)+2}{d_1(x,y)+2}\\
&\leq \frac{1}{2}.
\end{aligned}
$$
From the above discussions, we deduce that \eqref{eq-cl-phi} holds for every three pairwise distinct points $x,y,z \in M$. This shows that $F\in \mathcal F (M, d_1, d_2, d_3, \varphi)$, where $\varphi(t)=\frac{t}{2}$ for every $t \in \mathbb{R}^+$.
}
\end{example}

Our main result in this section, is the following fixed point theorem.

\begin{theorem}\label{T2.7}
Let $d_i$, $i=1,2,3$,  be three metrics on $M$ such that $|M|\geq 3$ and $(M,d_1)$ be complete. Let $F: M\to M$ be a mapping satisfying    the following conditions:
\begin{itemize}
\item[{\rm{(i)}}] For all $u\in M$, $F(Fu)\neq u$, provided $Fu\neq u$;
\item[{\rm{(ii)}}] $F$ is continuous on $(M,d_1)$;
\item[{\rm{(iii)}}] $F\in \mathcal{F}(M,d_1,d_2,d_3,\varphi)$ for some $\varphi \in \Phi$.
\end{itemize}
Then $\Fix(F)\neq\emptyset$ and $|\Fix(F)|\leq 2$.
\end{theorem}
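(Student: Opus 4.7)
The plan is to mirror the Picard-iteration argument behind Theorem~\ref{T1.3}, but with ``progress'' measured by the three-metric perimeter
$$
s_n := d_1(u_{n-1},u_n) + d_2(u_n,u_{n+1}) + d_3(u_{n+1},u_{n-1}),
$$
and with the Cauchy extraction performed in $d_1$ alone at the end. I fix $u_0\in M$ arbitrarily and set $u_{n+1}=Fu_n$. If $u_{n+1}=u_n$ for some $n$, then $u_n\in \Fix(F)$ and existence is immediate, so I may assume $u_{n+1}\neq u_n$ for every $n\geq 0$. Condition~(i) applied to $u_{n-1}$ then gives $u_{n+1}=F(Fu_{n-1})\neq u_{n-1}$, so that the three iterates $u_{n-1},u_n,u_{n+1}$ are pairwise distinct and hence admissible in~\eqref{Contraction-Single}.

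Applying~\eqref{Contraction-Single} to $(x,y,z)=(u_{n-1},u_n,u_{n+1})$ yields $s_{n+1}\leq \varphi(s_n)$, and the monotonicity~(C$_1$) together with induction gives $s_n\leq \varphi^{n-1}(s_1)$ for all $n\geq 1$. Because each of the three summands defining $s_n$ is nonnegative,
$$
d_1(u_{n-1},u_n)\leq s_n \leq \varphi^{n-1}(s_1),
$$
so by~(C$_2$) one obtains $\sum_{n\geq 1}d_1(u_{n-1},u_n)<\infty$. A standard telescoping triangle-inequality bound in $(M,d_1)$ then shows that $\{u_n\}$ is $d_1$-Cauchy, hence by the $d_1$-completeness it converges to some $u^*\in M$. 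The $d_1$-continuity of $F$ (hypothesis~(ii)) then lets me pass to the limit in $u_{n+1}=Fu_n$ and conclude $Fu^*=u^*$, so $\Fix(F)\neq\emptyset$.

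For the cardinality bound I would proceed by contradiction: three pairwise distinct fixed points $x,y,z$ would, by~\eqref{Contraction-Single} combined with~\eqref{pp2}, satisfy
$$
d_1(x,y)+d_2(y,z)+d_3(z,x)\leq \varphi\bigl(d_1(x,y)+d_2(y,z)+d_3(z,x)\bigr)<d_1(x,y)+d_2(y,z)+d_3(z,x),
$$
the strict inequality being valid because $d_1(x,y)>0$ keeps the argument of $\varphi$ strictly positive. The two places deserving a bit of care are the invocation of condition~(i) precisely where admissibility of the triple in~\eqref{Contraction-Single} is needed, and the observation that the three-metric perimeter dominates each single-metric distance term-by-term, which is what permits the Cauchy argument to be run in $(M,d_1)$ alone and explains why the completeness and continuity hypotheses are stated with respect to $d_1$ rather than $d_2$ or $d_3$.
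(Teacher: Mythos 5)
Your proposal is correct and follows essentially the same route as the paper's proof: Picard iteration, condition (i) to ensure the consecutive triples $u_{n-1},u_n,u_{n+1}$ are pairwise distinct, the contraction plus (C$_1$) to get the perimeter bound $s_n\leq\varphi^{n-1}(s_1)$, summability from (C$_2$) to conclude $d_1$-Cauchyness, completeness and $d_1$-continuity of $F$ to obtain the fixed point, and the $\varphi(s)<s$ contradiction for the bound $|\Fix(F)|\leq 2$. The only difference is cosmetic indexing of the perimeters, so nothing further is needed.
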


\begin{proof}
We first show that 	$\Fix(F)\neq\emptyset$. Let $u_0\in M$. Consider the Picard sequence $\{u_n\}\subset M$ defined by
$$
u_n=F^nu_0,\quad n\geq 0.
$$
If $u_{n-1}=u_{n}$ for some $n\geq 1$, then $u_{n-1}\in \Fix(F)$ and the theorem is proved. So, without restriction of the generality, we may assume that
$$
u_{n-1}\neq u_{n},\quad n\geq 1, 	
$$
which implies by (i) that
$$
u_{n-1}\neq u_{n+1},\quad n\geq 1.
$$
Consequently, for all $n\geq 1$, $u_{n-1},u_n$ and $u_{n+1}$ are three pairwise distinct points. By (iii), taking $(x,y,z)=(u_0,u_1,u_2)$ in \eqref{Contraction-Single}, we obtain
$$
d_1(Fu_0,Fu_1)+d_2(Fu_1,Fu_2)+ d_3(Fu_2,Fu_0)\leq \varphi\left(d_1(u_0,u_1)+d_2(u_1,u_2)+d_3(u_2,u_0)\right),	
$$
that is,
$$
d_1(u_1,u_2)+d_2(u_2,u_3)+ d_3(u_3,u_1)\leq \varphi\left(d_1(u_0,u_1)+d_2(u_1,u_2)+d_3(u_2,u_0)\right),	
$$
which implies by (C$_1$) that
\begin{equation}\label{S1}
\varphi\left(d_1(u_1,u_2)+d_2(u_2,u_3)+ d_3(u_3,u_1)\right)\leq \varphi^2\left(d_1(u_0,u_1)+d_2(u_1,u_2)+d_3(u_2,u_0)\right)	.
\end{equation}
Similarly, taking $(x,y,z)=(u_1,u_2,u_3)$ in \eqref{Contraction-Single}, we obtain
$$
d_1(u_2,u_3)+d_2(u_3,u_4)+ d_3(u_4,u_2)\leq \varphi\left(d_1(u_1,u_2)+d_2(u_2,u_3)+d_3(u_3,u_1)\right),	
$$
which implies by \eqref{S1} that
$$
d_1(u_2,u_3)+d_2(u_3,u_4)+ d_3(u_4,u_2)\leq \varphi^2\left(d_1(u_0,u_1)+d_2(u_1,u_2)+d_3(u_2,u_0)\right)	.
$$
Continuing in the same way, we obtain by induction that
$$
d_1(u_n,u_{n+1})+d_2(u_{n+1},u_{n+2})+ d_3(u_{n+2},u_n)\leq \varphi^n\left(d_1(u_0,u_1)+d_2(u_1,u_2)+d_3(u_2,u_0)\right)	
$$
for all $n\geq 0$, which yields
\begin{equation}\label{S2}
d_1(u_n,u_{n+1})\leq \varphi^n(\tau_0),\quad n\geq 1,		
\end{equation}
where
$$
\tau_0=d_1(u_0,u_1)+d_2(u_1,u_2)+d_3(u_2,u_0)>0.
$$
We now prove that $\{u_n\}$ is a Cauchy sequence on $(M,d_1)$.  For all $n,p\geq 1$, making use of the triangle inequality  and \eqref{S2}, we get
$$
\begin{aligned}
d_1(u_n,u_{n+p}) &\leq d_1(u_n,u_{n+1})+d_1(u_{n+1},u_{n+2})+\cdots+d_1(	u_{n+p-1},u_{n+p})\\
&\leq \varphi^n(\tau_0)+\varphi^{n+1}(\tau_0)+\cdots +\varphi^{n+p-1}(\tau_0)\\
&=\sum_{m=n}^{n+p-1}\varphi^{m}(\tau_0)\\
&=\sum_{m=0}^{n+p-1}	\varphi^{m}(\tau_0)-\sum_{m=0}^{n-1}\varphi^{m}(\tau_0),
\end{aligned}
$$
which implies by (C$_2$) that
$$
d_1(u_n,u_{n+p}) \leq \left(\sum_{m=0}^{\infty}	\varphi^{m}(\tau_0)-\sum_{m=0}^{n-1}\varphi^{m}(\tau_0)\right)\to 0\mbox{ as }n\to \infty.
$$
This proves that $\{u_n\}$ is a Cauchy sequence on the metric space $(M,d_1)$. Since $(M,d_1)$ is complete, there exists $u^*\in M$ such that
\begin{equation}\label{hot1}
\lim_{n\to \infty} d_1(u_n,u^*)=0,
\end{equation}
which implies by the   continuity of $F$ on $(M,d_1)$ that
\begin{equation}\label{hot2}
\lim_{n\to \infty} d_1(u_{n+1},Fu^*)=\lim_{n\to \infty} d_1(Fu_n,Fu^*)=0.
\end{equation}
Since every metric is continuous, by \eqref{hot1}, we have
\begin{equation}\label{hot3}
\lim_{n\to \infty} d_1(u_{n+1},Fu^*)=d_1(u^*,Fu^*).
\end{equation}
Then, by \eqref{hot2} and \eqref{hot3}, we get $u^*=Fu^*$, that is, $u^*\in \Fix(F)$.

We now show that $|\Fix(F)|\leq 2$. We argue by contradiction supposing that
$x,y$ and $z$ are three pairwise distinct fixed points of $F$. Then, making use of \eqref{Contraction-Single}
with the equalities $Fx=x$, $Fy=y$, $Fz=z$, we get
\begin{equation}\label{S3}{
d_1(x,y)+d_2(y,z)+ d_3(z,x)\leq \varphi\left(d_1(x,y)+d_2(y,z)+d_3(z,x)\right).	
}
\end{equation}
On the other hand, since
$$
d_1(x,y)+d_2(y,z)+d_3(z,x)>0,
$$
it follows from property \eqref{pp2} that
$$
\varphi\left(d_1(x,y)+d_2(y,z)+d_3(z,x)\right)<d_1(x,y)+d_2(y,z)+ d_3(z,x),
$$
which contradicts \eqref{S3}. This shows that $F$ admits at most two fixed points. The proof of Theorem \ref{T2.7} is then completed.
\end{proof}

We now study some particular cases of Theorem \ref{T2.7}. Recall that for a given metric space $X$, a point $x \in X$ is said to be an \emph{accumulation point} of  $X$ if every open ball centered at $x$ contains infinitely many points of $X$.

\begin{proposition}\label{PR2.8}
Let $d_i$, $i=1,2,3$,  be three metrics on $M$ such that $|M|\geq 3$ and
\begin{equation}\label{cd-d1+d2-PR2.7}
\max\{d_2(u,v),d_3(u,v)\}\leq \kappa d_1(u,v),\quad  u,v\in M	
\end{equation}
for some constant $\kappa>0$. If  $F\in \mathcal{F}(M,d_1,d_2,d_3,\varphi)$ for some $\varphi\in \Phi$, then $F$ is continuous on $(M,d_1)$.
\end{proposition}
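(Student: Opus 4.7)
The plan is to prove continuity of $F$ at an arbitrary point $u\in M$ with respect to $d_1$ by applying the three-points contraction \eqref{Contraction-Single} to a triangle built from $u$, an approximating term $u_n$, and an auxiliary vertex $z_n$ chosen very close to $u$. The fundamental obstacle is that \eqref{Contraction-Single} simultaneously controls three distances rather than the single one we need, so the strategy is to make the two extra contributions on the right-hand side vanish in the limit; this forces $z_n$ to depend on $n$ and to converge to $u$, which is where the accumulation-point notion becomes crucial.

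To handle the pairwise-distinctness requirement of Definition \ref{def2.5}, I would first dispose of the easy case where $u$ is not an accumulation point of $(M,d_1)$: then $u$ is $d_1$-isolated, so any sequence $(u_n)$ with $d_1(u_n,u)\to 0$ satisfies $u_n=u$ eventually and continuity at $u$ is immediate. Splitting a general sequence into the harmless subsequence where $u_n=u$ and the subsequence where $u_n\neq u$, I may assume that $u$ is an accumulation point and that $u_n\neq u$ for every $n$.

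Since $u$ is an accumulation point of $(M,d_1)$, for each $n$ the punctured ball $B_{d_1}(u,1/n)\setminus\{u,u_n\}$ is nonempty (an infinite set minus at most one point), so I can pick $z_n$ in it. Then $u_n,u,z_n$ are pairwise distinct and condition (iii) applied with $(x,y,z)=(u_n,u,z_n)$ gives
\[
d_1(Fu_n,Fu)\leq\varphi\bigl(d_1(u_n,u)+d_2(u,z_n)+d_3(z_n,u_n)\bigr).
\]
Using the domination \eqref{cd-d1+d2-PR2.7} and the triangle inequality in $d_1$, I bound $d_2(u,z_n)\leq\kappa d_1(u,z_n)\leq\kappa/n$ and $d_3(z_n,u_n)\leq\kappa d_1(z_n,u_n)\leq\kappa\bigl(1/n+d_1(u,u_n)\bigr)$, so the whole argument of $\varphi$ tends to $0$. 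Invoking the property $\lim_{s\to 0^+}\varphi(s)=0$ recorded in the remark following the definition of $\Phi$ then yields $d_1(Fu_n,Fu)\to 0$, establishing continuity.

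The only real subtlety is the $n$-dependence of the third vertex: a fixed auxiliary point $z$ would leave the positive constant $d_2(u,z)+d_3(z,u)$ frozen inside $\varphi$, which could not be driven to $0$ since $\varphi$ is not assumed continuous away from $0$. The accumulation-point dichotomy is precisely what allows $z_n$ to slide toward $u$ while remaining distinct from both $u$ and $u_n$, unlocking the use of $\lim_{s\to 0^+}\varphi(s)=0$.
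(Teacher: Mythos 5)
Your proof is correct and follows essentially the same route as the paper's: the isolated/accumulation-point dichotomy, an auxiliary third point chosen in a small $d_1$-ball to form a pairwise distinct triple, discarding the two extra terms on the left of \eqref{Contraction-Single}, and bounding the right-hand side via \eqref{cd-d1+d2-PR2.7} and the triangle inequality together with the fact that $\varphi(s)<s$ (equivalently $\lim_{s\to 0^+}\varphi(s)=0$). The only differences are cosmetic: you phrase continuity sequentially with $z_n$ sliding toward $u$, whereas the paper runs the same estimate in $\varepsilon$--$\delta$ form with $\delta_\varepsilon=\varepsilon/(1+3\kappa)$.
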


\begin{proof}

Let us show that $F$ is continuous at every point $z_0\in M$.

\noindent $\bullet$ Case 1: $z_0$ is an isolated point in $(M,d_1)$.\\
In this case,  $F$ 	is obviously continuous at $z_0$.

\noindent $\bullet$  Case 2:  $z_0$ is an accumulation point in $(M,d_1)$.\\
For all $\varepsilon>0$, let
\begin{equation}\label{delta-eps}
\delta_\varepsilon=\frac{\varepsilon}{1+3\kappa}.
\end{equation}
Since $z_0$ is an accumulation point in $(M,d_1)$, there exist points  $y,z\in M$ such that $y\neq z$,
\begin{equation}\label{S2-PR2.7}
0<d_1(z_0,y)< \delta_\varepsilon
\, \text{ and } \,
0<d_1(z_0,z)< \delta_\varepsilon.
\end{equation}
Remark that from \eqref{S2-PR2.7}, $z_0,z$ and $y$ are three pairwise distinct points. Then, making use of \eqref{Contraction-Single}, we obtain
$$
d_1(Fz_0,Fz)+d_2(Fz,Fy)+ d_3(Fy,Fz_0)\leq \varphi\left(d_1(z_0,z)+d_2(z,y)+d_3(y,z_0)\right),
$$
which implies that
\begin{equation}\label{S3-PR2.7}
d_1(Fz_0,Fz)\leq \varphi\left(d_1(z_0,z)+d_2(z,y)+d_3(y,z_0)\right).
\end{equation}
Since $d_1(z_0,z)+d_2(z,y)+d_3(y,z_0)>0$, it follows from property \eqref{pp2} that
$$
\varphi\left(d_1(z_0,z)+d_2(z,y)+d_3(y,z_0)\right)<d_1(z_0,z)+d_2(z,y)+d_3(y,z_0),
$$
which implies by \eqref{cd-d1+d2-PR2.7} and the triangle inequality that
$$
\begin{aligned}
\varphi\left(d_1(z_0,z)+d_2(z,y)+d_3(y,z_0)\right) &< d_1(z_0,z)+\kappa d_1(z,y)+\kappa d_1(y,z_0)\\
&\leq d_1(z_0,z)+\kappa d_1(z,z_0)+\kappa d_1(z_0,y)+\kappa d_1(y,z_0).
\end{aligned}
$$
Then, from \eqref{delta-eps} and \eqref{S2-PR2.7}, we deduce that
\begin{equation}\label{S4-PR2.7}
\varphi\left(d_1(z_0,z)+d_2(z,y)+d_3(y,z_0)\right) 	
<\delta_{\varepsilon}(1+3\kappa)=\varepsilon.
\end{equation}
Finally, by \eqref{S3-PR2.7} and \eqref{S4-PR2.7}, we get
$$
d_1(Fz_0,Fz)<\varepsilon,
$$
which proves that $F$ is continuous at $z_0$. This completes the proof of Proposition \ref{PR2.8}.
\end{proof}

\begin{remark}\label{RK2}
Having traced the proofs of Theorem~\ref{T2.7} and Proposition~\ref{PR2.8} we see that nowhere properties of metrics $d_2$ and $d_3$, such as triangle inequalities, continuity, etc., have been used. Thus, Theorem~\ref{T2.7} and Proposition~\ref{PR2.8} are valid under the assumption that $d_2$ and $d_3$ are semimetrics. Recall that semimetric is a function  $d\colon X\times X\to \mathbb{R}^+$ satisfying for all $x,y \in X$ only two axioms of metric space:
 \begin{enumerate}
   \item $(d(x,y)=0)\Leftrightarrow (x=y)$,
   \item $d(x,y)=d(y,x)$.
 \end{enumerate}
A pair $(X,d)$, where  $d$  is a semimetric on $X$, is called a \emph{semimetric space}. Such spaces were first examined by Fr\'{e}chet in~\cite{Fr06}, where he called them ``classes (E)''. Later these spaces attracted the attention of many mathematicians.
\end{remark}

From Theorem \ref{T2.7} and Proposition \ref{PR2.8}, we deduce the following result.

\begin{corollary}\label{CR2.9}
Let $d_i$, $i=1,2,3$,  be three metrics on $M$ such that $|M|\geq 3$, $(M,d_1)$ is complete and \eqref{cd-d1+d2-PR2.7} holds for some constant $\kappa>0$. 	Let $F: M\to M$ be a mapping satisfying    the following conditions:
\begin{itemize}
\item[{\rm{(i)}}] For all $u\in M$, $F(Fu)\neq u$, provided $Fu\neq u$;
\item[{\rm{(ii)}}] $F\in \mathcal{F}(M,d_1,d_2,d_3,\varphi)$ for some $\varphi \in \Phi$.
\end{itemize}
Then $\Fix(F)\neq\emptyset$ and $|\Fix(F)|\leq 2$.
\end{corollary}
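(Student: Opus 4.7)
The plan is to derive this corollary as an immediate consequence of Theorem~\ref{T2.7} together with Proposition~\ref{PR2.8}. Hypothesis (ii) of the corollary asserts $F\in\mathcal{F}(M,d_1,d_2,d_3,\varphi)$ for some $\varphi\in\Phi$, and we are also assuming \eqref{cd-d1+d2-PR2.7}, namely $\max\{d_2(u,v),d_3(u,v)\}\leq \kappa\, d_1(u,v)$ for all $u,v\in M$. These are exactly the hypotheses of Proposition~\ref{PR2.8}, so invoking it yields that $F$ is continuous on $(M,d_1)$.

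With continuity in hand, I would then check that all three assumptions of Theorem~\ref{T2.7} are satisfied: condition (i) of that theorem matches condition (i) of the corollary verbatim; condition (ii) of that theorem (continuity of $F$ on $(M,d_1)$) has just been established via Proposition~\ref{PR2.8}; and condition (iii) of that theorem is condition (ii) of the corollary. The completeness of $(M,d_1)$ and the cardinality assumption $|M|\geq 3$ are also carried over directly. Applying Theorem~\ref{T2.7} then gives $\Fix(F)\neq\emptyset$ and $|\Fix(F)|\leq 2$, which is the desired conclusion.

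Since the corollary is, by design, a packaging of the preceding two results, no genuine obstacle is expected in the argument; the whole point is to trade the abstract continuity assumption (ii) of Theorem~\ref{T2.7} for the concrete, verifiable domination condition \eqref{cd-d1+d2-PR2.7} on the semi/auxiliary metrics $d_2,d_3$. The only thing worth a word of care in the write-up is to state the chain of implications cleanly, so that the reader sees that \eqref{cd-d1+d2-PR2.7} plus the three-points contraction \eqref{Contraction-Single} automatically forces $d_1$-continuity, after which Theorem~\ref{T2.7} applies without modification.
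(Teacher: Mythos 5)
Your proposal matches the paper's intended derivation exactly: Proposition~\ref{PR2.8} supplies the $d_1$-continuity from \eqref{cd-d1+d2-PR2.7} and the three-points contraction, after which Theorem~\ref{T2.7} applies verbatim. The argument is correct and essentially identical to the paper's.
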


\begin{remark}
Taking in Corollary \ref{CR2.9} $d_i=d$, $i=1,2,3$, $\kappa=1$ and $\varphi(t)=\lambda t$, $\lambda\in [0,1)$, we obtain Theorem~\ref{T1.3} (see Remark \ref{RK}).
\end{remark}

We provide below an example illustrating Theorem \ref{T2.7}.

\begin{example}\label{ex2.11}
Let $M=\{w_1,w_2,w_3,w_4\}\subset \mathbb{R}^2$, where $$
w_1=\left(-\frac{9}{4},0\right),\,\,w_2=(0,0),\,\, w_3=\left(\frac{175}{72},-\sqrt{9-\left(\frac{175}{72}\right)^2}\right),\,\, w_4=\left(-\frac{55}{24},\frac{5\sqrt{23}}{24}\right).
$$	
 Consider the mapping $F: M\to M$ defined by
$$
Fw_1=w_1,\,\, Fw_2=w_2,\,\,Fw_3=w_4,\,\, Fw_4=w_1.
$$
Let $\delta$ be the discrete metric on $M$, that is,
$$
\delta(w_i,w_j)=\left\{\begin{array}{llll}
0 &\mbox{if}& i=j,\\[4pt]
1 &\mbox{if}& i\neq j.	
\end{array}
\right.
$$
Remark that $F$ is not a mapping contracting perimeters of triangles on $(M,\delta)$ (in the sense of Definition \ref{def1.2} with $d=\delta$). Indeed, we have
$$
\begin{aligned}
\frac{\delta(Fw_1,Fw_2)+\delta(Fw_2,Fw_3)+\delta(Fw_3,Fw_1)}{\delta(w_1,w_2)+\delta(w_2,w_3)+\delta(w_3,w_1)}
&=\frac{\delta(w_1,w_2)+\delta(w_2,w_4)+\delta(w_4,w_1)}{\delta(w_1,w_2)+\delta(w_2,w_3)+\delta(w_3,w_1)}\\
&=1.
\end{aligned}
$$

We now consider the three metrics $d_1,d_2,d_3$ on $M$, where $d_1=\delta$ and

\begin{equation}\label{d2=d3-ex2.11}
d_2(w_i,w_j)=d_3(w_i,w_j)=\|w_i-w_j\|,\quad i,j\in\{1,2,3,4\}.
\end{equation}
Here, $\|\cdot\|$ denotes the Euclidean norm. Elementary calculations show that
$$
\|w_i-w_j\|=\left\{\begin{array}{lllll}
\frac{9}{4} &\mbox{if}& (i,j)=(1,2),\\[4pt]
5 	&\mbox{if}& (i,j)=(1,3),\\[4pt]
1 &\mbox{if}& (i,j)=(1,4),\\[4pt]
3 &\mbox{if}& (i,j)=(2,3),\\[4pt]
\frac{5}{2} &\mbox{if}& (i,j)=(2,4),\\[4pt]
5.46846&\mbox{if}& (i,j)=(3,4).
\end{array}
\right.	
$$

Clearly, $(M,d_1)$ is a complete metric space and $F$ satisfies conditions (i) and (ii) of Theorem \ref{T2.7}. 	 We claim that
\begin{equation}\label{claim-ex2.11}
d_1(Fw_i,Fw_j)+d_2(Fw_j,Fw_k)+d_3(Fw_k,Fw_i)\leq \frac{23}{25}\left(d_1(w_i,w_j)+d_2(w_j,w_k)+d_3(w_k,w_i)\right)
\end{equation}
for every three pairwise distinct indices $i,j,k\in\{1,2,3,4\}$.

\begin{table}[htt!]
\begin{center}
\begin{tabular}{|c | c | c |c|}
  \hline			
  $(i,j,k)$ & $A(i,j,k)$  & $B(i,j,k)$ & $R(i,j,k)$\\
  \hline
  $(1,2,3)$ & 9/2 & 9 & 1/2 \\
  \hline
  $(1,3,2)$ & 23/4 & 25/4 & 23/25 \\
  \hline
 $(2,3,1)$ & 17/4 & 33/4 & 17/33 \\
  \hline
  $(1,2,4)$ & 13/4 & 9/2 & 13/18 \\
  \hline
  $(1,4,2)$ & 9/2 & 23/4 & 18/23 \\
  \hline
  $(2,4,1)$ & 13/4 & 27/4 & 13/27\\
  \hline
  $(1,3,4)$ & 2 & 6.46846 & 0.3091\\
  \hline
  $(1,4,3)$ & 2 & 11. 46846 & 0.1743\\
  \hline
  $(3,4,1)$ & 2 & 7 & 2/7\\
  \hline
  $(2,3,4)$ & 17/4 & 8.96846 & 0.4738 \\
  \hline
  $(2,4,3)$ & 9/2 & 8.46846 & 0.5313\\
  \hline
  $(3,4,2)$ & 23/4 & 13/2 & 23/26\\
  \hline
\end{tabular}
\caption{The values of $A(i,j,k)$, $B(i,j,k)$ and  $R(i,j,k)$}\label{Tb1}
\end{center}
\end{table}

For every three pairwise distinct indices $i,j,k\in\{1,2,3,4\}$,  let
$$
R(i,j,k)=\frac{d_1(Fw_i,Fw_j)+d_2(Fw_j,Fw_k)+d_3(Fw_k,Fw_i)}{d_1(w_i,w_j)+d_2(w_j,w_k)+d_3(w_k,w_i)}=\frac{A(i,j,k)}{B(i,j,k)}.
$$
Remark that by~\eqref{d2=d3-ex2.11}
$$
R(i,j,k)=R(j,i,k).
$$
So, for every three pairwise distinct indices  $i,j,k\in\{1,2,3,4\}$, we have just to show that \eqref{claim-ex2.11} holds for $(i,j,k)$, $(i,k,j)$,  $(j,k,i)$. Namely, we have to check twelve cases. Table \ref{Tb1} provides the different values of  $A(i,j,k)$,  $B(i,j,k)$ and $R(i,j,k)$, which confirm \eqref{claim-ex2.11}.

Consequently, $F\in \mathcal{F}(M,d_1,d_2,d_3,\varphi)$ with $\varphi(t)=\frac{23}{25}t$ for all $t\in \mathbb{R}^+$.  Then Theorem \ref{T2.7} applies. On the other hand, observe that $\Fix(F)=\{w_1,w_2\}$, which confirms the result given by Theorem \ref{T2.7}.
\end{example}

\section{Three-points multi-valued contractions} \label{sec3}

In this section,  we are concerned with the study of fixed points for  the following class of multi-valued mappings.

\begin{definition}\label{def3.1}
Let  $(M,d)$ be a metric space with $|M|\geq 3$ and $\lambda\in (0,1)$.  We denote by $\widetilde{\mathcal{F}}(M,d,\lambda)$,  the class of multi-valued mappings $F: M\to  CB(M)$ 	satisfying the three-points multi-valued contraction
\begin{equation}\label{Contraction-Multi}
H(Fx,Fy)+H(Fy,Fz)+ \mathcal{D}(Fz,Fx)\leq \lambda \left(d(x,y)+d(y,z)+d(z,x)\right)	
\end{equation}
for every three pairwise distinct points $x,y,z \in M$.
\end{definition}

The following lemma (see  \cite{Nadler}) will be used later.

\begin{lemma}\label{L3.2}
Let $(M,d)$ be a metric space and  $A,B\in CB(M)$. Then, for all $a\in A$ and $\varepsilon>0$, there exists $b\in B$ such that
$$
d(a,b)\leq H(A,B)+\varepsilon.
$$
\end{lemma}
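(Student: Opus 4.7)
The plan is to unfold the definitions of the point-to-set distance $D(a,B)$ and of the Hausdorff--Pompeiu metric $H(A,B)$, and then use the characterisation of the infimum.

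First, I would fix an arbitrary $a\in A$ and $\varepsilon>0$. From the very definition of $H$, namely
$$
H(A,B)=\max\left\{\sup_{x\in A} D(x,B),\ \sup_{y\in B}D(y,A)\right\},
$$
one gets $\sup_{x\in A} D(x,B)\leq H(A,B)$, and in particular
$$
D(a,B)\leq \sup_{x\in A} D(x,B)\leq H(A,B).
$$

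Next, recalling that $D(a,B)=\inf_{b\in B} d(a,b)$, the defining property of the infimum yields the existence of some $b\in B$ with $d(a,b)\leq D(a,B)+\varepsilon$. Combining this with the previous estimate gives
$$
d(a,b)\leq D(a,B)+\varepsilon\leq H(A,B)+\varepsilon,
$$
which is exactly the inequality claimed.

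There is no real obstacle in this argument: it is essentially a two-line consequence of unfolding the definitions and invoking the $\varepsilon$-characterisation of the infimum. The only mild point to keep in mind is that $B$ is required to be nonempty (so that $D(a,B)$ is a well-defined real number and the infimum admits approximating elements), which is guaranteed by the assumption $B\in CB(M)$; closedness and boundedness of $A,B$ play no role in the argument itself and are used only implicitly to ensure that $H(A,B)$ is a finite number.
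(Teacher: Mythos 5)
Your argument is correct: bounding $D(a,B)\leq\sup_{x\in A}D(x,B)\leq H(A,B)$ and then picking $b\in B$ via the $\varepsilon$-characterisation of the infimum is exactly the standard proof of this lemma. The paper does not prove it at all (it simply cites Nadler), so there is nothing to compare beyond noting that your two-line derivation, including the remark that only nonemptiness of $B$ is essential, is the canonical argument.
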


We first establish the following result.

\begin{proposition}\label{PR3.3}
Let  $(M,d)$ be a metric space with $|M|\geq 3$ and $\lambda\in (0,1)$. If $F\in \widetilde{\mathcal{F}}(M,d,\lambda)$, then 	$F: (M,d)\to  (CB(M),H)$ is continuous. 	
\end{proposition}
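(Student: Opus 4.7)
The plan is to mirror the proof of Proposition~\ref{PR2.8}, splitting on whether the base point is isolated or an accumulation point of $(M,d)$. Let $z_0 \in M$ be arbitrary; we show $F$ is continuous at $z_0$.

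If $z_0$ is isolated in $(M,d)$, continuity holds trivially. Otherwise, fix $\varepsilon > 0$ and set $\delta_\varepsilon = \varepsilon/(4\lambda)$. The goal is to show that whenever $z \in M$ satisfies $d(z_0,z) < \delta_\varepsilon$, we have $H(Fz_0,Fz) < \varepsilon$. The case $z = z_0$ is immediate, so assume $0 < d(z_0,z) < \delta_\varepsilon$. Because $z_0$ is an accumulation point, the open ball $B_{\delta_\varepsilon}(z_0)$ contains infinitely many points of $M$, so we may pick $y \in M$ with $y \neq z_0$, $y \neq z$, and $0 < d(z_0,y) < \delta_\varepsilon$. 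Then $z_0,y,z$ are three pairwise distinct points.

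Next I would apply the three-point multi-valued contraction \eqref{Contraction-Multi} with $(x,y,z) \leftarrow (z_0,y,z)$, obtaining
$$
H(Fz_0,Fy) + H(Fy,Fz) + \mathcal{D}(Fz,Fz_0) \leq \lambda\bigl(d(z_0,y) + d(y,z) + d(z,z_0)\bigr).
$$
Dropping the two nonnegative terms $H(Fz_0,Fy)$ and $H(Fy,Fz)$ from the left side, and using the elementary inequality $H(A,B) \leq \mathcal{D}(A,B)$ for $A,B \in CB(M)$ (which follows at once from the definitions), we get
$$
H(Fz_0,Fz) \leq \mathcal{D}(Fz,Fz_0) \leq \lambda\bigl(d(z_0,y) + d(y,z) + d(z,z_0)\bigr).
$$
Then the triangle inequality gives $d(y,z) \leq d(y,z_0) + d(z_0,z) < 2\delta_\varepsilon$, so the right-hand side is strictly less than $\lambda \cdot 4\delta_\varepsilon = \varepsilon$.

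I do not anticipate any real obstacle here: the proof is a near verbatim adaptation of Proposition~\ref{PR2.8}, the only subtle points being (a) the need to invoke the accumulation-point hypothesis to produce an auxiliary point $y$ distinct from both $z_0$ and $z$ (handled exactly as in Proposition~\ref{PR2.8}), and (b) the brief justification that $H(A,B) \leq \mathcal{D}(A,B)$, which I would state and dispatch in one line since $\sup_{a \in A} \inf_{b \in B} d(a,b) \leq \sup_{(a,b) \in A \times B} d(a,b)$, and similarly with $A$ and $B$ swapped.
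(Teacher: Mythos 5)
Your proof is correct and follows essentially the same route as the paper: the same case split on isolated versus accumulation points, the same choice $\delta_\varepsilon=\varepsilon/(4\lambda)$, an auxiliary third point near $z_0$, and the triangle inequality. The only (harmless) difference is the permutation of points in \eqref{Contraction-Multi}: the paper takes $(x,y,z)=(z_0,z,y)$ so that $H(Fz_0,Fz)$ appears directly on the left-hand side, whereas your choice $(z_0,y,z)$ forces the extra one-line observation $H(Fz_0,Fz)\leq \mathcal{D}(Fz,Fz_0)$, which you justify correctly.
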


\begin{proof}
Let $z_0\in M$. We distinguish two cases.\\
$\bullet$ Case 1: $z_0$ is an isolated point in $(M,d_1)$. \\
In this case, $F$ is obviously continuous at $z_0$.  \\
$\bullet$ Case 2: $z_0$ is an accumulation point in $(M,d_1)$.\\
For all $\varepsilon>0$, let
\begin{equation}\label{S1-PR3.3}
\delta_\varepsilon=\frac{\varepsilon}{4\lambda}. 	
\end{equation}
Since $z_0$ is an accumulation point in $(M,d_1)$, there exist points  $y,z\in M$ such that $y\neq z$,
\begin{equation}\label{S2-PR3.3}
0<d_1(z_0,y)< \delta_\varepsilon
\, \text{ and } \,
0<d_1(z_0,z)< \delta_\varepsilon.
\end{equation}
Then, by \eqref{S2-PR3.3} $z_0,z$ and $y$ are three pairwise distinct points. Hence, making use of \eqref{Contraction-Multi}, we obtain
$$
H(Fz_0,Fz)+H(Fz,Fy)+ \mathcal{D}(Fy,Fz_0)\leq \lambda \left(d(z_0,z)+d(z,y)+d(y,z_0)\right),
$$
which implies by the triangle inequality, \eqref{S2-PR3.3} and \eqref{S1-PR3.3}  that
$$
\begin{aligned}
H(Fz_0,Fz) &\leq \lambda \left(d(z_0,z)+d(z,y)+d(y,z_0)\right)\\
&\leq \lambda \left(d(z_0,z)+d(z,z_0)+d(z_0,y)+d(y,z_0)\right)\\
&=2\lambda \left(d(z_0,z)+d(z_0,y)\right)\\
&<  2\lambda \left(\delta_\varepsilon+\delta_\varepsilon\right)\\
&= 4\lambda \delta_\varepsilon \\
&=\varepsilon.
\end{aligned}
$$
This shows the continuity of $F$ at $z_0$.
\end{proof}

Our main result in this section, is the following fixed point theorem.

\begin{theorem}\label{T3.4}
Let $(M,d)$ be a complete metric space with $|M|\geq 3$. 	Let $F: M\to CB(M)$ be a multi-valued mapping satisfying    the following conditions:
\begin{itemize}
\item[{\rm{(i)}}] For all $u,v\in M$, we have
$$
v\in Fu,\, u\neq v\implies u\not\in Fv;
$$	
\item[{\rm{(ii)}}] $F\in \widetilde{\mathcal{F}}(M,d,\lambda)$ for some $\lambda\in (0,1)$.
\end{itemize}
Then $\Fix(F)\neq \emptyset$.
\end{theorem}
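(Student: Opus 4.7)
The plan follows Nadler's strategy adapted to the three-points setting. Pick $u_0\in M$ arbitrarily and $u_1\in Fu_0$; if $u_1=u_0$ we are done, so assume $u_1\neq u_0$. Fix a summable sequence of positive reals $(\varepsilon_n)_{n\ge 1}$, for instance $\varepsilon_n=\lambda^n$. Using Lemma~\ref{L3.2} recursively, choose $u_{n+1}\in Fu_n$ with
$$
d(u_n,u_{n+1})\le H(Fu_{n-1},Fu_n)+\varepsilon_n,\qquad n\ge 1.
$$
If at some stage $u_{n+1}=u_n$, then $u_n\in Fu_n$ is a fixed point and we stop. Otherwise hypothesis~(i) applied to the pair $(u_{n-1},u_n)$ (we have $u_n\in Fu_{n-1}$ with $u_n\neq u_{n-1}$) yields $u_{n-1}\notin Fu_n$; combined with $u_{n+1}\in Fu_n$ this forces $u_{n+1}\neq u_{n-1}$. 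Hence every consecutive triple $u_{n-1},u_n,u_{n+1}$ is pairwise distinct, so the contraction~\eqref{Contraction-Multi} may be applied to it.

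Let $a_n:=d(u_n,u_{n+1})+d(u_{n+1},u_{n+2})+d(u_n,u_{n+2})$. Applying~\eqref{Contraction-Multi} with $(x,y,z)=(u_{n-1},u_n,u_{n+1})$ gives
$$
H(Fu_{n-1},Fu_n)+H(Fu_n,Fu_{n+1})+\mathcal{D}(Fu_{n+1},Fu_{n-1})\le \lambda a_{n-1}.
$$
The key observation is that $u_n\in Fu_{n-1}$ and $u_{n+2}\in Fu_{n+1}$ imply $d(u_n,u_{n+2})\le \mathcal{D}(Fu_{n-1},Fu_{n+1})$ directly, without any Lemma~\ref{L3.2} error. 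Combined with the two Lemma~\ref{L3.2} bounds on $d(u_n,u_{n+1})$ and $d(u_{n+1},u_{n+2})$, this yields the clean recursion
$$
a_n\le \lambda a_{n-1}+\varepsilon_n+\varepsilon_{n+1}.
$$
A routine iteration together with $\sum\varepsilon_n<\infty$ gives $\sum_{n}a_n<\infty$, and since $d(u_n,u_{n+1})\le a_n$ the sequence $(u_n)$ is Cauchy in $(M,d)$. By completeness it converges to some $u^*\in M$.

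To conclude $u^*\in Fu^*$, invoke Proposition~\ref{PR3.3}: $F$ is continuous from $(M,d)$ into $(CB(M),H)$, so $H(Fu_n,Fu^*)\to 0$. Since $u_{n+1}\in Fu_n$,
$$
D(u^*,Fu^*)\le d(u^*,u_{n+1})+D(u_{n+1},Fu^*)\le d(u^*,u_{n+1})+H(Fu_n,Fu^*)\longrightarrow 0,
$$
and as $Fu^*\in CB(M)$ is closed this forces $u^*\in Fu^*$. The main difficulty is choosing the right quantity to iterate: a naive recursion on $s_n:=d(u_n,u_{n+1})$ alone does not close up, because the contraction~\eqref{Contraction-Multi} couples three Hausdorff-type distances at once; it is precisely the $\mathcal{D}$-term, together with the elementary bound $d(u_n,u_{n+2})\le \mathcal{D}(Fu_{n-1},Fu_{n+1})$, that lets the perimeter $a_n$ be controlled without introducing a third $\varepsilon$-penalty that would otherwise break the recursion.
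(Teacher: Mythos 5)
Your proposal is correct and follows essentially the same route as the paper's proof: the same Nadler-type construction with errors $\lambda^n$, the same use of hypothesis (i) to make consecutive triples pairwise distinct, the same perimeter quantity (your $a_n$ is the paper's $p_n$), the same key bound $d(u_n,u_{n+2})\le \mathcal{D}(Fu_{n-1},Fu_{n+1})$ coming from the $\mathcal{D}$-term, the same recursion $p_n\le \lambda p_{n-1}+\lambda^n+\lambda^{n+1}$ (yours is just index-shifted), and the same conclusion via Proposition~\ref{PR3.3} and closedness of $Fu^*$. The only cosmetic difference is that the paper iterates the recursion explicitly to get $d(u_n,u_{n+1})<\lambda^n(p_0+2n)$ and sums, whereas you summarize this as a routine summability argument.
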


\begin{proof}
Let $u_0\in M$ and $u_1\in Fu_0$.  By Lemma \ref{L3.2}, there exists $u_2\in Fu_1$ such that
$$
d(u_1,u_2)\leq H(Fu_0,Fu_1)+\lambda,
$$
and there exists $u_3\in Fu_2$ such that
$$
d(u_2,u_3)\leq H(Fu_1,Fu_2)+\lambda^2.
$$	
Continuing in the same way, 	by induction, we construct a sequence $\{u_n\}\subset M$ such that
\begin{equation}\label{SS1}
u_{n+1}\in Fu_n,\quad n\geq 0
\end{equation}
and
\begin{equation}\label{SS2}
d(u_n,u_{n+1})\leq H(Fu_{n-1},Fu_n)+\lambda^n,\quad n\geq 1.
\end{equation}
If $u_{n}=u_{n+1}$ for some $n\geq 0$, then by \eqref{SS1}, $u_n\in \Fix(F)$, and the theorem is proved. Then, without restriction of the generality, we may suppose that $u_n\neq u_{n+1}$ for all $n\geq 0$, which implies by \eqref{SS1} and (i) that $u_n\not\in Fu_{n+1}$.  Since $u_{n+2}\in Fu_{n+1}$, then $u_n\neq u_{n+2}$. Consequently, for all $n\geq 0$, $u_n,u_{n+1}$ and $u_{n+2}$ are three pairwise distinct points in $M$. On the other hand, by \eqref{SS2}, we have
\begin{equation}\label{SS3}
d(u_{n-1},u_{n})\leq H(Fu_{n-2},Fu_{n-1})+\lambda^{n-1},\quad n\geq 2.
\end{equation}
Furthermore, by the definition of the diameter, we have
\begin{equation}\label{SS4}
d(u_{n+1},u_{n-1})\leq \mathcal{D}(Fu_n,Fu_{n-2}),\quad n\geq 2. 	
\end{equation}
Then, it follows from \eqref{SS2}, \eqref{SS3} and \eqref{SS4} that
\begin{equation}\label{SS5}
\begin{aligned}
& d(u_{n-1},u_{n})+d(u_n,u_{n+1})+d(u_{n+1},u_{n-1})\\
&\leq 	H(Fu_{n-2},Fu_{n-1})+ H(Fu_{n-1},Fu_n)+\mathcal{D}(Fu_n,Fu_{n-2})+\lambda^{n-1}+\lambda^n,\quad n\geq 2.
\end{aligned}
\end{equation}
Taking into consideration that for all $n\geq 0$, $u_n,u_{n+1}$ and $u_{n+2}$ are three pairwise distinct points and making use of   \eqref{Contraction-Multi} with $(x,y,z)=(u_{n-2},u_{n-1},u_n)$, we obtain
\begin{equation}\label{SS6}
\begin{aligned}
&H(Fu_{n-2},Fu_{n-1})+ H(Fu_{n-1},Fu_n)+\mathcal{D}(Fu_n,Fu_{n-2})\\
&\leq \lambda \left(d(u_{n-2},u_{n-1})+d(u_{n-1},u_n)+d(u_n,u_{n-2})\right),\quad n\geq 2.
\end{aligned}	
\end{equation}
Then, from \eqref{SS5} and \eqref{SS6}, we deduce that
$$
\begin{aligned}
&d(u_{n-1},u_{n})+d(u_n,u_{n+1})+d(u_{n+1},u_{n-1})\\
&\leq \lambda \left(d(u_{n-2},u_{n-1})+d(u_{n-1},u_n)+d(u_n,u_{n-2})\right)+	\lambda^{n-1}+\lambda^n,\quad n\geq 2,
\end{aligned}
$$
that is,
\begin{equation}\label{SS7}
p_n\leq \lambda p_{n-1}+\lambda^n+\lambda^{n+1},\quad n\geq 1,
\end{equation}
where
\begin{equation}\label{pn}
p_n=d(u_{n},u_{n+1})+d(u_{n+1},u_{n+2})+d(u_{n+2},u_{n}),\quad n\geq 0.
\end{equation}
From \eqref{SS7}, we get
\begin{eqnarray*}
&& p_1 \leq \lambda p_0+\lambda+\lambda^2,\\
&& p_2\leq \lambda p_1+\lambda^2 +\lambda^3\leq \lambda^2p_0+2(\lambda^2+\lambda^3),\\
&&p_3\leq \lambda p_2+\lambda^3+\lambda^4\leq \lambda^3p_0+3(\lambda^3+\lambda^4),\\
&&\vdots \\
&&p_n\leq \lambda^np_0+n(\lambda^n+\lambda^{n+1}),\quad n\geq 0,	
\end{eqnarray*}
which implies by \eqref{pn} that
\begin{equation}\label{SS8}
d(u_{n},u_{n+1})\leq 	\lambda^np_0+n(\lambda^n+\lambda^{n+1}),\quad n\geq 0.
\end{equation}
Since $0<\lambda<1$ we get $\lambda^{n+1}<\lambda^{n}$. Hence, it follows from~\eqref{SS8} that
\begin{equation}\label{SS88}
d(u_{n},u_{n+1})< \lambda^np_0+n(\lambda^n+\lambda^{n})=
\lambda^n(p_0+2n)
,\quad n\geq 0.
\end{equation}
We now show that $\{u_n\}$ is a Cauchy sequence on $(M,d)$. Indeed, making use of \eqref{SS88} and the triangle inequality, for all $n,k\geq 1$,  we obtain
\begin{equation}\label{SS9}
\begin{aligned}
d(u_n,u_{n+k})&\leq \sum_{m=n}^{n+k-1} d(u_m,u_{m+1})\\
&< p_0 \sum_{m=n}^{n+k-1} \lambda^m+2\sum_{m=n}^{n+k-1}m\lambda^m\\
&= p_0\left(\sum_{m=0}^{n+k-1} \lambda^m-\sum_{m=0}^{n-1}\lambda^m\right)+2\left(\sum_{m=0}^{n+k-1}m\lambda^m-\sum_{m=0}^{n-1}m\lambda^m\right).
\end{aligned}
\end{equation}
On the other hand, since $0<\lambda<1$,  the two series $\sum_{m\geq 0}\lambda^m$ and $\sum_{m\geq 0}m\lambda^m$ are convergent. Then, by \eqref{SS9}, we obtain
$$
\begin{aligned}
d(u_n,u_{n+k}) < 	p_0\left(\sum_{m=0}^{\infty} \lambda^m-\sum_{m=0}^{n-1}\lambda^m\right)+2\left(\sum_{m=0}^{\infty}m\lambda^m-\sum_{m=0}^{n-1}m\lambda^m\right)
\to 0 \mbox{ as }n\to \infty,
\end{aligned}
$$
which shows that $\{u_n\}$ is a Cauchy sequence on $(M,d)$. Then, from the completeness of $(M,d)$, we deduce that there exists $u^*\in M$ such that
\begin{equation}\label{SS10}
\lim_{n\to \infty}d(u_n,u^*)=0. 	
\end{equation}
We now prove that $u^*$ is a fixed point of $F$. Since $F: (M,d)\to  (CB(M),H)$ is continuous (by (ii) and Proposition \ref{PR3.3}), we deduce from \eqref{SS10} that
\begin{equation}\label{SS11}
\lim_{n\to \infty}H(Fu_n,Fu^*)=0. 	
\end{equation}
Then, making use of \eqref{SS1}, \eqref{SS10} and \eqref{SS11}, we get
$$
\begin{aligned}
D(u^*,Fu^*) &\leq d(u^*,u_{n+1})+D(u_{n+1},Fu^*)\\
&\leq \left(d(u^*,u_{n+1})+H(Fu_n,Fu^*)\right)\to 0\mbox{ as }n\to \infty.	
\end{aligned}
$$
Consequently, we obtain $D(u^*,Fu^*)=0$. Since $Fu^*$ is closed, we deduce  that $u^*\in Fu^*$, that is, $u^*\in \Fix(F)$. The proof of Theorem \ref{T3.4} is then completed.
\end{proof}

\begin{definition}\label{def3.5}
Let  $(M,d)$ be a metric space with $|M|\geq 3$ and $\lambda\in (0,1)$.  We denote by $\widetilde{\mathcal{F}}'(M,d,\lambda)$ and $\widetilde{\mathcal{F}}''(M,d,\lambda)$  the classes of multi-valued mappings $F: M\to  CB(M)$ 	satisfying the three-points multi-valued contractions
\begin{equation*}
H(Fx,Fy)+\mathcal{D}(Fy,Fz)+ \mathcal{D}(Fz,Fx)\leq \lambda \left(d(x,y)+d(y,z)+d(z,x)\right),
\end{equation*}
\begin{equation*}
\mathcal{D}(Fx,Fy)+\mathcal{D}(Fy,Fz)+ \mathcal{D}(Fz,Fx)\leq \lambda \left(d(x,y)+d(y,z)+d(z,x)\right)	
\end{equation*}
for every three pairwise distinct points $x,y,z \in M$, respectively.
\end{definition}
Since the inequalities
$$
H(Fx,Fy)+H(Fy,Fz)+ \mathcal{D}(Fz,Fx)
$$
$$
\leq
H(Fx,Fy)+\mathcal{D}(Fy,Fz)+ \mathcal{D}(Fz,Fx),
$$
$$
\leq \mathcal{D}(Fx,Fy)+\mathcal{D}(Fy,Fz)+ \mathcal{D}(Fz,Fx),
$$
hold, we obtain the inclusions $\widetilde{\mathcal{F}}''(M,d,\lambda)\subseteq \widetilde{\mathcal{F}}'(M,d,\lambda) \subseteq \widetilde{\mathcal{F}}(M,d,\lambda)$. Hence, we get the following.
\begin{corollary}
Theorem~\ref{T3.4} holds for the classes $\widetilde{\mathcal{F}}'(M,d,\lambda)$ and $\widetilde{\mathcal{F}}''(M,d,\lambda)$.
\end{corollary}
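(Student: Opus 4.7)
The plan is to reduce the corollary directly to Theorem~\ref{T3.4} via the inclusion chain $\widetilde{\mathcal{F}}''(M,d,\lambda)\subseteq \widetilde{\mathcal{F}}'(M,d,\lambda) \subseteq \widetilde{\mathcal{F}}(M,d,\lambda)$ that the authors display immediately before the statement. All three classes differ only in replacing certain occurrences of $H$ by the (larger) diameter functional $\mathcal{D}$, so the contraction inequality defining the smaller class is strictly stronger than the one defining the larger class. Once that chain of inclusions is in hand, the corollary is just a matter of invoking Theorem~\ref{T3.4} on a mapping which happens to belong to a subclass.

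The one non-cosmetic step is to justify the chain of inclusions by checking that $H(A,B)\leq \mathcal{D}(A,B)$ for all $A,B\in CB(M)$. I would argue as follows: for every $a\in A$,
\[
D(a,B)=\inf_{b\in B}d(a,b)\leq \sup_{b\in B}d(a,b)\leq \sup_{(a',b)\in A\times B}d(a',b)=\mathcal{D}(A,B),
\]
and by symmetry $D(b,A)\leq \mathcal{D}(A,B)$ for every $b\in B$. Taking suprema and then the maximum in the definition of the Hausdorff--Pompeiu distance yields $H(A,B)\leq \mathcal{D}(A,B)$. This is precisely what turns the displayed chain of inequalities between $H(Fx,Fy)+H(Fy,Fz)+\mathcal{D}(Fz,Fx)$, $H(Fx,Fy)+\mathcal{D}(Fy,Fz)+\mathcal{D}(Fz,Fx)$ and $\mathcal{D}(Fx,Fy)+\mathcal{D}(Fy,Fz)+\mathcal{D}(Fz,Fx)$ into the class inclusions.

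With the inclusions established, I would conclude the corollary by noting that if $F:M\to CB(M)$ satisfies condition~(i) of Theorem~\ref{T3.4} and additionally belongs to $\widetilde{\mathcal{F}}'(M,d,\lambda)$ or $\widetilde{\mathcal{F}}''(M,d,\lambda)$, then in particular $F\in \widetilde{\mathcal{F}}(M,d,\lambda)$; since condition~(i) concerns only $F$ itself and not the ambient class, both hypotheses of Theorem~\ref{T3.4} are met and therefore $\Fix(F)\neq\emptyset$. There is no genuine obstacle to anticipate here; the main point is just to make explicit that condition~(i) is a property of $F$ alone and carries over unchanged, so the only substantive content is the elementary inequality $H\leq\mathcal{D}$ on $CB(M)$.
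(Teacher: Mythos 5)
Your proposal is correct and follows essentially the same route as the paper: the authors also derive the corollary from the chain of inequalities giving $\widetilde{\mathcal{F}}''(M,d,\lambda)\subseteq \widetilde{\mathcal{F}}'(M,d,\lambda) \subseteq \widetilde{\mathcal{F}}(M,d,\lambda)$ and then invoke Theorem~\ref{T3.4}. Your explicit verification of $H(A,B)\leq \mathcal{D}(A,B)$ is a detail the paper leaves implicit, but it does not change the argument.
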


We give below an example to illustrate Theorem \ref{T3.4}.

\begin{example}\label{exx3.5}
Let $M=\{v_1,v_2,v_3\}$ and $d$ be the discrete metric on $M$, that is,
$$
d(v_i,v_j)=\left\{\begin{array}{llll}
0 &\mbox{if}& i=j,\\[4pt]
1 &\mbox{if}& i\neq j.	
\end{array}
\right.
$$	
Consider the multi-valued mapping $F: (M,d)\to (CB(M),H)$ defined by
$$
Fv_1=\{v_1\},\,\, Fv_2=\{v_1\},\,\, Fv_3=\{v_1,v_3\},
$$
where $H$ is the Hausdorff-Pompeiu metric on  $CB(M)$ induced by $d$.

Observe that $F$ satisfies condition (i) of Theorem \ref{T3.4}. Indeed, we have
$$
v_1\in Fv_2,\,\, v_1\neq v_2,\,\, v_2\not\in Fv_1=\{v_1\}
$$
and
$$
v_1\in Fv_3,\,\, v_1\neq v_3,\,\, v_3\not\in Fv_1=\{v_1\}.
$$
On the other hand, for $(i,j,k)=(1,2,3)$, we have
$$
\begin{aligned}
&H(Fv_i,Fv_j)+H(Fv_j,Fv_k)+\mathcal{D}(Fv_k,Fv_i)	\\
&=H\left(\{v_1\},\{v_1\}\right)+H\left(\{v_1\},\{v_1,v_3\}\right)+\mathcal{D}\left(\{v_1,v_3\},\{v_1\}\right)\\
&=0+1+1\\
&=2
\end{aligned}
$$
and
$$
d(v_i,v_j)+d(v_j,v_k)+d(v_k,v_i)=3,
$$
which show that
\begin{equation}\label{Ratio}
\frac{H(Fv_i,Fv_j)+H(Fv_j,Fv_k)+\mathcal{D}(Fv_k,Fv_i)}{d(v_i,v_j)+d(v_j,v_k)+d(v_k,v_i)}=\frac{2}{3}.
\end{equation}
Similar calculations show that for every three pairwise distinct indices $i,j,k\in \{1,2,3\}$, \eqref{Ratio} holds. Consequently, $F\in \widetilde{\mathcal{F}}(M,d,\lambda)$ for every $\frac{2}{3}\leq \lambda<1$.  Then $F$ satisfies also condition (ii) of Theorem \ref{T3.4}. Furthermore, we have
$$
\Fix(F)=\{v_1,v_3\},
$$
which confirms that $\Fix(F)\neq \emptyset$.

Observe also that in this example, Nadler's fixed point theorem (Theorem \ref{T1.1}) is inapplicable. This can be easily seen remarking that
$$
\frac{H(Fv_1,Fv_3)}{d(v_1,v_3)}=H(Fv_1,Fv_3)=H\left(\{v_1\},\{v_1,v_3\}\right)=1.
$$
\end{example}

\begin{definition}\label{def3.8}
Let  $(M,d)$ be a metric space with $|M|\geq 3$ and $\lambda\in (0,\frac{1}{2})$.  We denote by $\overline{\mathcal{F}}(M,d,\lambda)$,  the class of multi-valued mappings $F: M\to  CB(M)$ 	satisfying the three-points multi-valued contraction
\begin{equation}\label{Contraction-MultiH}
H(Fx,Fy)+H(Fy,Fz)+H(Fz,Fx)\leq \lambda \left(d(x,y)+d(y,z)+d(z,x)\right)	
\end{equation}
for every three pairwise distinct points $x,y,z \in M$.
\end{definition}

Similarly to Proposition~\ref{PR3.3} we establish the following.

\begin{proposition}\label{PR3.3H}
Let  $(M,d)$ be a metric space with $|M|\geq 3$ and $\lambda\in (0,\frac{1}{2})$. If $F\in \overline{\mathcal{F}}(M,d,\lambda)$, then 	$F: (M,d)\to  (CB(M),H)$ is continuous. 	
\end{proposition}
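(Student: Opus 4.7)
The plan is to transcribe the argument of Proposition~\ref{PR3.3} essentially line for line, since replacing the single diameter term on the left-hand side of the contraction by a third Hausdorff--Pompeiu term only strengthens the inequality; the single-term estimate we need will still follow from dropping nonnegative terms, exactly as in the earlier proof.

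I would fix $z_0\in M$ and $\varepsilon>0$ and split into the two standard cases. If $z_0$ is an isolated point of $(M,d)$, continuity at $z_0$ is automatic. Otherwise $z_0$ is an accumulation point, and I set $\delta_\varepsilon=\varepsilon/(4\lambda)$. For an arbitrary $z\in M$ with $0<d(z_0,z)<\delta_\varepsilon$, the accumulation property provides a third point $y\in M\setminus\{z_0,z\}$ with $0<d(z_0,y)<\delta_\varepsilon$; then $z_0,y,z$ are three pairwise distinct points. Applying \eqref{Contraction-MultiH} to this triple and discarding the two nonnegative summands $H(Fz,Fy)$ and $H(Fy,Fz_0)$ on the left yields $H(Fz_0,Fz)\leq \lambda\bigl(d(z_0,z)+d(z,y)+d(y,z_0)\bigr)$. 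One use of the triangle inequality $d(z,y)\leq d(z,z_0)+d(z_0,y)$ bounds the right-hand side by $2\lambda\bigl(d(z_0,z)+d(z_0,y)\bigr)<4\lambda\delta_\varepsilon=\varepsilon$, which is the desired continuity estimate at $z_0$.

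No substantive obstacle arises; the argument is essentially a verbatim copy of the one for Proposition~\ref{PR3.3}, with the non-negativity of Hausdorff--Pompeiu distances replacing the non-negativity of the diameter. It is worth remarking that the restriction $\lambda<\tfrac{1}{2}$ built into Definition~\ref{def3.8} is not used anywhere in this continuity proof — any $\lambda\in(0,1)$, and in fact any positive $\lambda$, would suffice — so the tighter bound on $\lambda$ is presumably reserved for the accompanying Picard-type fixed-point argument for the class $\overline{\mathcal{F}}(M,d,\lambda)$, where the three $H$-terms on the left must compensate for the error accumulated in constructing the iterative sequence.
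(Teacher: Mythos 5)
Your proof is correct and is exactly what the paper intends: the paper gives no separate argument for Proposition~\ref{PR3.3H}, stating only that it is established ``similarly to Proposition~\ref{PR3.3}'', and your transcription (dropping the two extra nonnegative $H$-terms, then using the triangle inequality with $\delta_\varepsilon=\varepsilon/(4\lambda)$) is precisely that adaptation, written if anything a bit more carefully, since you let $z$ be an arbitrary point of the punctured ball and use the accumulation property only to supply the auxiliary point $y$. Your side remark that the restriction $\lambda<\tfrac{1}{2}$ plays no role in the continuity argument is also accurate.
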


\begin{theorem}\label{T3.4H}
Let $(M,d)$ be a complete metric space with $|M|\geq 3$. 	Let $F: M\to CB(M)$ be a multi-valued mapping satisfying    the following conditions:
\begin{itemize}
\item[{\rm{(i)}}] For all $u,v\in M$, we have
$$
v\in Fu,\, u\neq v\implies u\not\in Fv;
$$	
\item[{\rm{(ii)}}] $F\in \overline{\mathcal{F}}(M,d,\lambda)$ for some $\lambda\in (0,\frac{1}{2})$.
\end{itemize}
Then $\Fix(F)\neq \emptyset$.
\end{theorem}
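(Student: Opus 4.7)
The plan is to follow the blueprint of the proof of Theorem~\ref{T3.4}, adjusting the technical step where the term $\mathcal{D}(Fz,Fx)$ was exploited. I would begin by constructing, via Lemma~\ref{L3.2}, a sequence $\{u_n\}$ with $u_{n+1}\in Fu_n$ and
\[
a_n:=d(u_n,u_{n+1})\leq H(Fu_{n-1},Fu_n)+\lambda^n,\quad n\geq 1.
\]
If $u_n=u_{n+1}$ for some $n$, then $u_n\in Fu_n$ and we are done. Otherwise, condition~(i) together with $u_{n+2}\in Fu_{n+1}$ forces $u_n\neq u_{n+2}$, so that $u_{n-2},u_{n-1},u_n$ are pairwise distinct for every $n\geq 2$, which is what makes the three-points contraction~\eqref{Contraction-MultiH} applicable.

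The key new step, and the main obstacle, is replacing the estimate $d(u_{n+1},u_{n-1})\leq \mathcal{D}(Fu_n,Fu_{n-2})$ used in the proof of Theorem~\ref{T3.4}, because now all three pairs on the left-hand side of the contraction are measured by $H$. My substitute is the elementary triangle inequality
\[
d(u_{n-2},u_n)\leq a_{n-2}+a_{n-1},
\]
which, combined with the contraction applied at $(u_{n-2},u_{n-1},u_n)$, yields
\[
H(Fu_{n-2},Fu_{n-1})+H(Fu_{n-1},Fu_n)+H(Fu_n,Fu_{n-2})\leq 2\lambda\,(a_{n-2}+a_{n-1}).
\]
Dropping the nonnegative third term and using the Lemma~\ref{L3.2} bounds for $a_{n-1}$ and $a_n$ gives
\[
a_{n-1}+a_n\leq 2\lambda\,(a_{n-2}+a_{n-1})+\lambda^{n-1}+\lambda^n.
\]
Setting $b_n=a_{n-1}+a_n$, this is the scalar recurrence $b_n\leq 2\lambda\,b_{n-1}+(1+\lambda)\lambda^{n-1}$. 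This is exactly the point where the hypothesis $\lambda<\tfrac12$ enters: the contraction factor $2\lambda$ is strictly less than $1$, so unrolling the recurrence produces a bound of the form $b_n\leq C\,\alpha^{n}$ for some $\alpha\in(0,1)$ (e.g.\ $\alpha=\max\{2\lambda,\lambda\}=2\lambda$) and some constant $C$ depending only on $b_1$ and $\lambda$. Consequently $a_n\leq b_n\to 0$ geometrically and $\sum_{n}a_n<\infty$.

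The convergence of $\sum a_n$ makes $\{u_n\}$ a Cauchy sequence in $(M,d)$, and completeness yields $u^*\in M$ with $u_n\to u^*$. By Proposition~\ref{PR3.3H}, $F:(M,d)\to(CB(M),H)$ is continuous, so $H(Fu_n,Fu^*)\to 0$; the final estimate
\[
D(u^*,Fu^*)\leq d(u^*,u_{n+1})+D(u_{n+1},Fu^*)\leq d(u^*,u_{n+1})+H(Fu_n,Fu^*)\to 0
\]
together with the closedness of $Fu^*$ gives $u^*\in Fu^*$, as in the last step of the proof of Theorem~\ref{T3.4}. The only genuinely new ingredient, and where I expect the reader to want the most care, is the recurrence analysis above and the observation that the assumption $\lambda<1/2$ is precisely what makes the recurrence contractive.
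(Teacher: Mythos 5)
Your proposal is correct and takes essentially the same route as the paper's proof: the paper also compensates for the missing $\mathcal{D}$-term with the triangle inequality (applied to the third side of the perimeter on the left rather than to $d(u_{n-2},u_n)$ on the right of the contraction), obtains the same $2\lambda$-contractive recurrence ($p_n\le 2\lambda p_{n-1}+2\lambda^{n}+2\lambda^{n+1}$ in their notation), uses $\lambda<\tfrac12$ for summability and Cauchyness, and concludes via Proposition~\ref{PR3.3H} and the closedness of $Fu^*$. The only difference is bookkeeping — you track $b_n=a_{n-1}+a_n$ instead of the full perimeter $p_n$ — which changes nothing substantive.
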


\begin{proof}
The beginning of the proof of this theorem repeats word for word the proof of the Theorem~\ref{T3.4} up to inequality~\eqref{SS3}.

Further, it follows from the triangle inequality, \eqref{SS2} and \eqref{SS3}  that
\begin{equation}\label{SS5H}
\begin{aligned}
& d(u_{n-1},u_{n})+d(u_n,u_{n+1})+d(u_{n+1},u_{n-1})\\
& \leqslant d(u_{n-1},u_{n})+d(u_n,u_{n+1})+d(u_{n-1},u_{n})+d(u_n,u_{n+1})\\
& =2(d(u_{n-1},u_{n})+d(u_n,u_{n+1}))\\
&\leq 	2( H(Fu_{n-2},Fu_{n-1})+ H(Fu_{n-1},Fu_n))+2\lambda^{n-1}+2\lambda^n,\quad n\geq 2.
\end{aligned}
\end{equation}
Taking into consideration that for all $n\geq 0$, $u_n,u_{n+1}$ and $u_{n+2}$ are three pairwise distinct points and making use of   \eqref{Contraction-MultiH} with $(x,y,z)=(u_{n-2},u_{n-1},u_n)$, we obtain
\begin{equation*}
\begin{aligned}
&H(Fu_{n-2},Fu_{n-1})+ H(Fu_{n-1},Fu_n)+H(Fu_n,Fu_{n-2})\\
&\leq \lambda \left(d(u_{n-2},u_{n-1})+d(u_{n-1},u_n)+d(u_n,u_{n-2})\right),\quad n\geq 2.
\end{aligned}	
\end{equation*}
and
\begin{equation}\label{SS6H}
\begin{aligned}
&H(Fu_{n-2},Fu_{n-1})+ H(Fu_{n-1},Fu_n)\\
&\leq \lambda \left(d(u_{n-2},u_{n-1})+d(u_{n-1},u_n)+d(u_n,u_{n-2})\right),\quad n\geq 2.
\end{aligned}	
\end{equation}
Then, from \eqref{SS5H} and \eqref{SS6H}, we deduce that
$$
\begin{aligned}
&d(u_{n-1},u_{n})+d(u_n,u_{n+1})+d(u_{n+1},u_{n-1})\\
&\leq 2\lambda \left(d(u_{n-2},u_{n-1})+d(u_{n-1},u_n)+d(u_n,u_{n-2})\right)+	2\lambda^{n-1}+2\lambda^n,\quad n\geq 2,
\end{aligned}
$$
that is,
\begin{equation}\label{SS7H}
p_n\leq 2\lambda p_{n-1}+2\lambda^n+2\lambda^{n+1},\quad n\geq 1,
\end{equation}
where
\begin{equation}\label{pnH}
p_n=d(u_{n},u_{n+1})+d(u_{n+1},u_{n+2})+d(u_{n+2},u_{n}),\quad n\geq 0.
\end{equation}
From \eqref{SS7H}, we get
\begin{eqnarray*}
&& p_1 \leq 2\lambda p_0+2\lambda+2\lambda^2,\\
&& p_2\leq 2\lambda p_1+2\lambda^2 +2\lambda^3\leq 4\lambda^2p_0+6(\lambda^2+\lambda^3),\\
&&p_3\leq 2\lambda p_2+2\lambda^3+2\lambda^4\leq 8\lambda^3p_0+14(\lambda^3+\lambda^4),\\
&&\vdots \\
&&p_n\leq (2\lambda)^np_0+2(2^n-1)(\lambda^n+\lambda^{n+1}),\quad n\geq 0,	
\end{eqnarray*}
which implies by \eqref{pnH} that
\begin{equation}\label{SS8H}
d(u_{n},u_{n+1})\leq 	(2\lambda)^np_0+2(2^n-1)(\lambda^n+\lambda^{n+1}),\quad n\geq 0.
\end{equation}
Since $0<\lambda<\frac{1}{2}$ we get $\lambda^{n+1}<\lambda^{n}$. Hence, it follows from~\eqref{SS8} that
\begin{equation}\label{SS88H}
d(u_{n},u_{n+1})< (2\lambda)^np_0+2(2^n-1)(\lambda^n+\lambda^{n})=
(2\lambda)^np_0+4(2^n-1)\lambda^n
,\quad n\geq 0.
\end{equation}
We now show that $\{u_n\}$ is a Cauchy sequence on $(M,d)$. Indeed, making use of \eqref{SS88} and the triangle inequality, for all $n,k\geq 1$,  we obtain
\begin{equation}\label{SS9H}
\begin{aligned}
d(u_n,u_{n+k})&\leq \sum_{m=n}^{n+k-1} d(u_m,u_{m+1})\\
&< p_0 \sum_{m=n}^{n+k-1} (2\lambda)^m+4\sum_{m=n}^{n+k-1}(2^m-1)\lambda^m\\
&= p_0\left(\sum_{m=0}^{n+k-1} (2\lambda)^m-\sum_{m=0}^{n-1}(2\lambda)^m\right)+4\left(\sum_{m=0}^{n+k-1}(2^m-1)\lambda^m-\sum_{m=0}^{n-1}(2^m-1)\lambda^m\right).
\end{aligned}
\end{equation}
On the other hand, since $0<\lambda<\frac{1}{2}$,  the two series $\sum_{m\geq 0}(2\lambda)^m$ and $\sum_{m\geq 0}(2^m-1)\lambda^m$ are convergent. Then, by \eqref{SS9H}, we obtain
$$
\begin{aligned}
d(u_n,u_{n+k}) &< p_0\left(\sum_{m=0}^{\infty} (2\lambda)^m-\sum_{m=0}^{n-1}(2\lambda)^m\right)+4\left(\sum_{m=0}^{\infty}(2^m-1)\lambda^m-\sum_{m=0}^{n-1}(2^m-1)\lambda^m\right)\\
&\to 0 \mbox{ as }n\to \infty,
\end{aligned}
$$
which shows that $\{u_n\}$ is a Cauchy sequence on $(M,d)$. Then, from the completeness of $(M,d)$, we deduce that there exists $u^*\in M$ such that
$\lim_{n\to \infty}d(u_n,u^*)=0$. 	

The fact that $u^*$ is a fixed point of $F$ can be proved in a similar way as in Theorem~\ref{T3.4} only with the difference that instead of Proposition \ref{PR3.3} we use Proposition \ref{PR3.3H}.
\end{proof}

\vspace{0.5cm}

\noindent{\bf Author contributions:} All authors contributed equally to this work and have read and agreed to the published version of the manuscript.\\

\noindent{\bf Conflict of Interest:} This work does not have any conflicts of interest.\\



\begin{thebibliography}{10}

\bibitem{AG}
P.~Agarwal, M.~Jleli, B.~Samet,
\newblock {\em Fixed Point Theory in Metric Spaces: Recent Advances and
  Applications},
\newblock Springer, Singapore, 2018,
\newblock \doi{10.1007/978-981-13-2913-5}. $\,$

\bibitem{AS}
N.A. Assad, W.A. Kirk,
\newblock Fixed point theorems for set-valued mappings of contractive type,
\newblock {\em Pacific J. Math.}, {\bf 43}:553--562, 1972,
\newblock \doi{10.2140/pjm.1972.43.553}. $\,$

\bibitem{Banach}
S.~Banach,
\newblock Sur les op\'{e}rations dans les ensembles abstraits et leur
  applications aux \'{e}quations int\'{e}grales,
\newblock {\em Fund Math.}, {\bf 3}:133--181, 1922,
\newblock \doi{10.4064/fm-3-1-133-181}. $\,$

\bibitem{Bey}
C.~Bey, E.~Petrov, R.~Salimov,
\newblock On three-point generalizations of {B}anach and {E}delstein fixed
  point theorems,
\newblock {\em Filomat}, {\bf 39}(1):185--195, 2025,
\newblock \doi{10.2298/FIL2501185B}. $\,$

\bibitem{BI}
R.K. Bisht, E.~Petrov,
\newblock Three point analogue of \'{C}iri\'c-reich-rus type mappings with
  non-unique fixed points,
\newblock {\em J. Anal.}, {\bf 32}:2609--2627, 2024,
\newblock \doi{10.1007/s41478-024-00743-2}. $\,$

\bibitem{BO}
D.W. Boyd, J.S.W. Wong,
\newblock On nonlinear contractions,
\newblock {\em Proc. Am. Math. Soc.}, {\bf 20}:458--464, 1969,
\newblock \doi{10.1090/S0002-9939-1969-0239559-9}. $\,$

\bibitem{BR}
A.~Branciari,
\newblock A fixed point theorem of {B}anach-{C}accioppoli type on a class of
  generalized metric spaces,
\newblock {\em Publ. Math. Debrecen}, {\bf 57}(1-2):31--37, 2000,
\newblock \doi{10.5486/pmd.2000.2133}. $\,$

\bibitem{Ciric}
Lj.~B. \'Ciri\'c,
\newblock A generalization of {B}anach's contraction principle,
\newblock {\em Proc. Amer. Math. Soc.}, {\bf 45}:267--273, 1974,
\newblock \doi{10.2307/2040075}. $\,$

\bibitem{CZ}
S.~Czerwik,
\newblock Contraction mappings in $b$-metric spaces,
\newblock {\em Acta Math. Inform., Univ. Ostrav.}, {\bf 1}(1):5--11, 1993. $\,$

\bibitem{Feng}
Y.~Feng, S.~Liu,
\newblock Fixed point theorems for multi-valued contractive mappings and
  multi-valued {C}aristi type mappings,
\newblock {\em J. Math. Anal. Appl.}, {\bf 317}:103--112, 2006,
\newblock \doi{10.1016/j.jmaa.2005.12.004}. $\,$

\bibitem{Fr06}
M.~Fr\'echet,
\newblock Sur quelques points du calcul fonctionnel,
\newblock {\em Rend. Circ. Mat. Palermo}, {\bf 22}:1--74, 1906. $\,$

\bibitem{IT}
S.~Itoh,
\newblock Multi-valued generalized contractions and fixed point theorems,
\newblock {\em Comment. Math. Univ. Carolin.}, {\bf 18}:247--258, 1977. $\,$

\bibitem{JS}
M.~Jleli, B.~Samet,
\newblock On a new generalization of metric spaces,
\newblock {\em J. Fixed Point Theory Appl.}, {\bf 20}:1--20, 2018,
\newblock \doi{10.1007/s11784-018-0606-6}. $\,$

\bibitem{Nadler}
S.B.~Nadler Jr,
\newblock Multi-valued contraction mappings,
\newblock {\em Pac. J. Math.}, {\bf 30}:475--488, 1969,
\newblock \doi{10.2140/pjm.1969.30.475}. $\,$

\bibitem{KA}
H.~Kaneko,
\newblock Generalized contractive multi-valued mappings and their fixed points,
\newblock {\em Math. Japon.}, {\bf 33}:57--64, 1988. $\,$

\bibitem{KH}
F.~Khojasteh, S.~Shukla, S.~Radenovi\'{c},
\newblock A new approach to the study of fixed point theorems via simulation
  functions,
\newblock {\em Filomat}, {\bf 29}:1189--1194, 2015,
\newblock \doi{10.2298/FIL1506189K}. $\,$

\bibitem{Kirk}
W.A. Kirk,
\newblock Fixed points of asymptotic contractions,
\newblock {\em J. Math. Anal. Appl.}, {\bf 277}(2):645--650, 2003,
\newblock \doi{10.1016/S0022-247X(02)00612-1}. $\,$

\bibitem{MIZ}
N.~Mizoguchi, W.~Takahashi,
\newblock Fixed point theorems for multi-valued mappings on complete metric
  spaces,
\newblock {\em J. Math. Anal. Appl.}, {\bf 141}:177--188, 1989,
\newblock \doi{10.1016/0022-247X(89)90214-X}. $\,$

\bibitem{Mustafa}
Z.~Mustafa, B.~Sims,
\newblock A new approach to generalized metric spaces,
\newblock {\em J. Nonlinear Convex Anal.}, {\bf 7}(2):289--297, 2006. $\,$

\bibitem{OL}
S.~Oltra, O.~Valero,
\newblock {B}anach's fixed theorem for partial metric spaces,
\newblock {\em Rend. Istit. Mat. Univ. Trieste}, {\bf 36}:17--26, 2004. $\,$

\bibitem{Petrov}
E.~Petrov,
\newblock Fixed point theorem for mappings contracting perimeters of triangles,
\newblock {\em J. Fixed Point Theory Appl.}, {\bf 25}:74, 2023,
\newblock \doi{10.1007/s11784-023-01078-4}. $\,$

\bibitem{PB24}
E.~Petrov, R.K. Bisht,
\newblock Fixed point theorem for generalized kannan type mappings,
\newblock {\em Rend. Circ. Mat. Palermo, II. Ser.}, {\bf 73}:2895--2912, 2024,
\newblock \doi{10.1007/s12215-024-01079-3}. $\,$

\bibitem{PE}
A.~Petru\c{s}el,
\newblock On {F}rigon-{G}ranas-type multifunctions,
\newblock {\em Nonlinear Anal. Forum.}, {\bf 7}:113--121, 2002. $\,$

\bibitem{PE2}
A.~Petru\c{s}el, G.~Petru\c{s}el, L.~Horvath,
\newblock Maia type fixed point theorems for multi-valued {F}eng-{L}iu
  operators,
\newblock {\em J. Anal.}, {\bf 32}(1):73--83, 2024,
\newblock \doi{10.1007/s41478-023-00609-z}. $\,$

\bibitem{PO}
O.~Popescu,
\newblock Some remarks on the paper "{F}ixed point theorems for generalized
  contractive mappings in metric spaces",
\newblock {\em J. Fixed Point Theory Appl.}, {\bf 23}:1--10, 2021,
\newblock \doi{10.1007/s11784-021-00908-7}. $\,$

\bibitem{PO2}
O.~Popescu, C.M. P\u{a}curar,
\newblock Mappings contracting triangles,
\newblock 2024,
\newblock \doi{10.48550/arXiv.2403.19488}. $\,$

\bibitem{PRO}
P.D. Proinov,
\newblock Fixed point theorems for generalized contractive mappings in metric
  spaces,
\newblock {\em J. Fixed Point Theory Appl.}, {\bf 22}:1--27, 2020,
\newblock \doi{10.1007/s11784-020-0756-1}. $\,$

\bibitem{PP24}
C.M. P\u{a}curar, O.~Popescu,
\newblock Fixed point theorem for generalized {C}hatterjea type mappings,
\newblock {\em Acta Math. Hungar.}, {\bf 173}:500--509, 2024,
\newblock \doi{10.1007/s10474-024-01455-6}. $\,$

\bibitem{RA}
E.~Rakotch,
\newblock A note on contractive mappings,
\newblock {\em Proc. Am. Math. Soc.}, {\bf 13}:459--465, 1962,
\newblock \doi{10.2307/2034961}. $\,$

\bibitem{Rus0}
I.A. Rus,
\newblock Fixed point theorems for multi-valued mappings in complete metric
  spaces,
\newblock {\em Math. Japon.}, {\bf 20}:21--24, 1975. $\,$

\bibitem{Rus2}
I.A. Rus,
\newblock {\em Generalized Contractions and Applications},
\newblock Cluj University Press, Cluj-Napoca, 2001. $\,$

\bibitem{TA}
A.~Taqbibt, M.~Chaib, S.~Melliani,
\newblock Fixed point results for a new multivalued {G}eraghty type contraction
  via {CG}-simulation functions,
\newblock {\em Filomat}, {\bf 37}(28):9709--9727, 2023,
\newblock \doi{10.2298/FIL2328709T}. $\,$

\end{thebibliography}
\end{document}